\newcommand{\bR}{\mathbb{R}}
\newcommand{\bZ}{\mathbb{Z}}
\theoremstyle{plain}
\newtheorem{theorem}{Theorem}[section]
\newtheorem{lemma}{Lemma}[section]
\newtheorem{proposition}{Proposition}[section]
\theoremstyle{definition}
\theoremstyle{remark}
\newtheorem{remark}{Remark}[section]
\numberwithin{equation}{section}
\title[Lacunary elliptic maximal]{Lacunary elliptic maximal operator on the Heisenberg group}
\author[Joonil Kim]{Joonil Kim}
\address[Joonil Kim]{Department of Mathematics, Yonsei University, Seoul 120-749, Republic of Korea}
\email{jikim7030@yonsei.ac.kr}
\author[Jeongtae Oh]{Jeongtae Oh}
\address[Jeongtae Oh]{Research Institute of Mathematics, Seoul National University, Seoul 08826, Republic of Korea}
\email{ojt0117@snu.ac.kr}
\begin{document}

\begin{abstract}
In this paper, we prove \( L^p \) boundedness results for lacunary elliptic maximal operators on the Heisenberg group. Furthermore, we extend these \( L^p \) estimates from skew-symmetric matrices, which naturally arise in Heisenberg group operations, to arbitrary matrices \( A \), investigating how the curvature induced by \( A \) governs the \( L^p \) boundedness of lacunary circular and elliptic maximal operators. Specifically, we provide necessary and sufficient conditions on \( A \) that determine whether these operators are bounded or unbounded on \( L^p \).
\end{abstract}

\maketitle

\setcounter{tocdepth}{1}
\tableofcontents

\newtheorem*{zyc}{Zygmund Conjecture}
\newtheorem*{opb}{Open Problem}

\section{Introduction and statement of results}

In recent years, there has been a growing interest in establishing \( L^p \) estimates for lacunary dilated geometric maximal functions on the Heisenberg groups. Notably, Bagchi, Hait, Roncal, and Thangavelu \cite{MR4250270} obtained  \( L^p \) estimates for lacunary spherical maximal functions on the Heisenberg group \( \mathbb{H}^n \) for \( n \geq 2 \), utilizing the group Fourier transform in conjunction with spectral analysis. In this paper, we focus on the case \( n = 1 \) and prove \( L^p \) estimates for lacunary elliptic maximal functions on the Heisenberg group. We employ the group Fourier transform as in \cite{MR4250270, MR4170795} and additionally utilize Littlewood-Paley theory on the Heisenberg groups along with decay estimates for oscillatory integral operators, following \cite{MR1924572}. Furthermore, we extend these \( L^p \) estimates from skew-symmetric matrices, which naturally arise in Heisenberg group operations, to arbitrary matrices \( A \), investigating how the curvature induced by \( A \) governs the \( L^p \) boundedness of lacunary circular and elliptic maximal operators. 

To introduce our results, let $\mathbb{M}_{2}(\mathbb{R})$ be the space of $2\times 2$ real matrices. Let $d\sigma$ denotes the canonical Lebesgue measure on $S^{1}$. Consider the averages over ellipses lying in $\mathbb{R}^2$, acting on function defined on $\mathbb{R}^2\times \mathbb{R}$ as follows.
\begin{align*}
	{E}^{A}_{t_1,t_2}f(x,x_3):&=\int_{ S^{1}}f(x-(t_1y_1,t_2y_2),x_3-x^tA\begin{pmatrix}t_1y_1\\t_2y_2\end{pmatrix}) d\sigma(y),
	\end{align*}
for $(x,x_3)\in \mathbb{R}^2\times\mathbb{R}$ and $t_1,t_2>0$. It is well known that we can identify the Heisenberg group $\mathbb{H}^1$ with $\mathbb{R}^{3}$ via the group law given by $(x,x_{d+1})\cdot_J (y,y_{d+1})=(x+y,x_{3}+y_{3}+x^tJy)$ where $J=\Bigg(\begin{matrix}
0&-2\\2&0	
\end{matrix}\Bigg)
$. 
Define the measure $d\sigma_1$ as the canonical Lebesgue measure supported on $S^{1}\times \{ 0 \} \subset \mathbb{H}^1$, with $\langle d\sigma_1^{t,s},f \rangle=\int f
((ty_1,sy_2,0))d\sigma(y)$. Then it can be viewed ${E}^{J}_{s,t}f(x,x_3)=f\ast_J \sigma_1^{t,s}$ where the convolution $\ast_J$ is defined on the Heisenberg group. For $f\in \mathcal{S}(\mathbb{R}^3)$, define the lacunary circular maximal operator $\mathcal{E}_{A}^1$ and the lacunary elliptic maximal operator $\mathcal{E}_{A}^2$ as
\begin{align*}
 	\mathcal{E}_{A}^1f(x,x_{3}):&=\sup_{k\in \bZ}|{E}^{A}_{2^k,2^k}f(x,x_{3})|,\\
	\mathcal{E}_{A}^2f(x,x_{3}):&=\sup_{k_1, k_2\in \bZ}|{E}^{A}_{2^{k_1},2^{k_2}}f(x,x_{3})|.
\end{align*}

We begin by presenting an example in Theorem \ref{3mt0}, where we demonstrate that the operators $\mathcal{E}_{A}^1$ and $\mathcal{E}_{A}^2$ are unbounded on $L^p$ spaces for certain matrices $A$. This example illustrates the failure of $L^p$ boundedness in the general case. In contrast, in Theorem \ref{3mt1}, we prove the $L^p$ boundedness of lacunary elliptic maximal operators on the Heisenberg group.
\begin{theorem}\label{3mt0}
For $A \in \mathbb{M}_{2}(\mathbb{R})$, if $A = cI$ for some $c \in \mathbb{R}\setminus \{0\}$, then the operator $\mathcal{E}_{A}^1$ is unbounded on $L^p(\mathbb{R}^3)$ for $0<p<\infty$. Furthermore, if $A = \begin{pmatrix} c & 0 \\ 0 & c \cdot 2^{2a} \end{pmatrix}$ for $a \in \mathbb{Z}$ and $c \in \mathbb{R}\setminus \{0\}$, then the operator $\mathcal{E}_{A}^2$ is unbounded on $L^p(\mathbb{R}^3)$ for $0<p<\infty$.
\end{theorem}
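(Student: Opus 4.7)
My approach for the circular case is to exhibit an explicit modulation that unwinds the Heisenberg phase when $A=cI$, reducing $E^{cI}_{t,t}$ to a planar spherical average composed with a $t$-dependent vertical shift; then a product-type counterexample exploits the lacunarity of those shifts to rule out any $L^p$ bound. Concretely, setting $\tilde f(x,x_3):=f(x,x_3+c|x|^2/2)$ and expanding $|x-ty|^2=|x|^2-2tx\cdot y+t^2$ on $S^1$, the cross term $-2tx\cdot y$ cancels the Heisenberg twist $-ctx\cdot y$ in $E^{cI}_{t,t}f$, leaving
\[
E^{cI}_{t,t}f(x,x_3)=(A^{\mathrm{Euc}}_t\tilde f)\bigl(x,\,x_3-c|x|^2/2-ct^2/2\bigr),\qquad A^{\mathrm{Euc}}_t g(x,x_3):=\int_{S^1}g(x-ty,x_3)\,d\sigma(y).
\]
Geometrically, the Heisenberg circle with $A=cI$ lies on a vertical translate of the paraboloid $\{x_3=c|x|^2/2\}$, and the modulation flattens it to the plane $\{x_3=\mathrm{const}\}$. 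Since both $f\mapsto\tilde f$ and $(x,x_3)\mapsto(x,x_3-c|x|^2/2)$ are $L^p$ isometries of $\mathbb{R}^3$, unboundedness of $\mathcal{E}^1_{cI}$ is equivalent to unboundedness of the shifted Euclidean lacunary spherical operator $T\tilde f(x,y_3):=\sup_{k\in\mathbb{Z}}|(A^{\mathrm{Euc}}_{2^k}\tilde f)(x,y_3-c\,2^{2k-1})|$.

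\textbf{Counterexample.} For $M\gg1$ take the tensor product $\tilde f_M=\chi_{B(0,M)}\otimes\chi_{[0,1]}$; then
\[
(A^{\mathrm{Euc}}_{2^k}\tilde f_M)(x,y_3-c\,2^{2k-1})=(A^{\mathrm{Euc}}_{2^k}\chi_{B(0,M)})(x)\,\chi_{I_k}(y_3),\qquad I_k:=[c\,2^{2k-1},\,c\,2^{2k-1}+1].
\]
The intervals $I_k$ are pairwise disjoint once $k\geq K_0=K_0(c)$, and for $K_0\leq k\leq\log_2(M/2)$ the planar factor equals $2\pi$ identically on $|x|\leq M/2$. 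Disjointness turns the $p$-th power of the sup into a sum of bounded-below terms:
\[
\|T\tilde f_M\|_p^p\ \geq\ \sum_{K_0\leq k\leq\log_2(M/2)}\|A^{\mathrm{Euc}}_{2^k}\chi_{B(0,M)}\|_p^p\ \gtrsim\ M^2\log M,
\]
whereas $\|\tilde f_M\|_p^p\sim M^2$. The operator-norm ratio therefore grows like $(\log M)^{1/p}\to\infty$ as $M\to\infty$, and standard mollification produces Schwartz test functions realizing the same growth.

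\textbf{Elliptic case and main obstacle.} For $A=\mathrm{diag}(c,c\cdot 2^{2a})$, the change of variables $u_1=x_1$, $u_2=2^ax_2$, $s_2=2^at_2$, combined with $\tilde f(u,x_3):=f(u_1,2^{-a}u_2,x_3)$, gives
\[
E^A_{2^{k_1},2^{k_2}}f(u_1,2^{-a}u_2,x_3)=E^{cI}_{2^{k_1},\,2^{a+k_2}}\tilde f(u_1,u_2,x_3).
\]
Integrality of $a$ makes $k_2\mapsto a+k_2$ a bijection of $\mathbb{Z}$, so $\mathcal{E}^2_A$ and $\mathcal{E}^2_{cI}$ are identified up to a $2^{a/p}$ Jacobian factor, and restricting the sup to the diagonal $k_1=k_2$ yields $\mathcal{E}^2_{cI}\tilde f\geq\mathcal{E}^1_{cI}\tilde f$; the circular case then forces unboundedness of $\mathcal{E}^2_A$. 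The only genuinely nontrivial step is the modulation identity: once one sees that with $A=cI$ the Heisenberg circle is trapped on a translate of the paraboloid $x_3=c|x|^2/2$, the maximal operator degenerates into a lacunary family of disjointly supported planar spherical averages, which cannot be $L^p$-controlled for any $p\in(0,\infty)$.
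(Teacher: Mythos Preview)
Your proof is correct and follows the same route as the paper: both apply the modulation $f(x,x_3)\mapsto f(x,x_3+\tfrac{c}{2}|x|^2)$ to flatten $E^{cI}_{2^k,2^k}$ into a planar circular average composed with the vertical shift $x_3\mapsto x_3-c\,2^{2k-1}$, then use eventual disjointness of these shifted intervals to force $(\log)^{1/p}$ blow-up of the operator norm, and reduce the elliptic case to the circular one via the anisotropic scaling $u_2=2^a x_2$. The only inessential difference is the counterexample scale: the paper fixes the planar ball $B_{10}$ and sends the vertical width $\delta\to 0$ (using $k\to-\infty$), whereas you fix the vertical interval $[0,1]$ and send the planar radius $M\to\infty$ (using $k\to+\infty$).
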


 \begin{theorem}\label{3mt1}
Let $J$ be a skew--symmetric matrix. Then $\mathcal{E}_{J}^2$ is bounded on $L^p(\mathbb{H}^1)$ for $1<p\leq \infty$.
\end{theorem}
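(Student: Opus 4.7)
I would combine the group Fourier transform on $\mathbb{H}^1$ with a spectral Littlewood--Paley decomposition associated to the sub-Laplacian, and then apply oscillatory-integral estimates to the resulting operator-valued symbols. The $L^\infty$ bound being trivial (the operator is a supremum of averages against a probability measure), it suffices to establish a quantitative $L^2$ estimate and interpolate with $L^\infty$ for $2 \le p \le \infty$; the range $1 < p < 2$ I would reach by isolating the zero-central-frequency part of the measure (which, after integrating out $x_3$, is controlled by a planar lacunary elliptic maximal function that is $L^p$-bounded for $p>1$ by a standard Fourier argument on $\bR^2$) and treating the remainder as a vector-valued Calder\'on--Zygmund singular integral on $\mathbb{H}^1$.

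For the $L^2$ core, Plancherel on $\mathbb{H}^1$ writes $\norm{f \ast_J \sigma_1^{t_1,t_2}}_{L^2}^{2}$ as an integral of Hilbert--Schmidt norms $\norm{\pi_\lambda(f)\,\pi_\lambda(\sigma_1^{t_1,t_2})}_{HS}^{2}$ against $|\lambda|\,d\lambda$, where $\pi_\lambda$ is the Schr\"odinger representation at central frequency $\lambda$. A direct computation gives
\begin{equation*}
\pi_\lambda(\sigma_1^{t_1,t_2}) \;=\; \int_{S^1} \pi_\lambda(t_1 y_1, t_2 y_2, 0) \, d\sigma(y),
\end{equation*}
which is an average of modulation--translation operators along the ellipse $\{(t_1 y_1, t_2 y_2) : y \in S^1\}$. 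Since this ellipse has nondegenerate curvature for every $(t_1,t_2)$, stationary phase in the $y$ variable converts $\pi_\lambda(\sigma_1^{t_1,t_2})$ into an oscillatory integral operator of the type studied in \cite{MR1924572}. Decomposing $f = \sum_{j} P_j f$ via spectral projectors of $\cL$ localized on the band $|\lambda|(2m+1) \sim 2^{j}$ (with $m$ the Hermite quantum number), I expect a bound of the shape
\begin{equation*}
\norm{\pi_\lambda(\sigma_1^{t_1,t_2})\, P_j}_{L^2 \to L^2} \;\lesssim\; \bigl(1 + 2^{j} t_1 t_2\bigr)^{-1/2},
\end{equation*}
accompanied by off-diagonal decay in the gap between Hermite indices.

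To upgrade these fixed-scale estimates to the two-parameter lacunary supremum, I would dominate $\sup_{k_1,k_2}$ on each Heisenberg Littlewood--Paley piece by the $\ell^{2}$ square function $\bigl(\sum_{k_1,k_2} |\cdot|^{2}\bigr)^{1/2}$ and sum over $(j, k_1, k_2)$ using the decay in $j + k_1 + k_2$ coming from the display above. The main obstacle I anticipate is that this scalar operator-norm bound depends only on the product $t_1 t_2 = 2^{k_1+k_2}$, so the diagonal sum $k_1 + k_2 = \mathrm{const}$ yields at most a constant gain; closing the summation requires extracting further decay either from the off-diagonal Hermite matrix elements of $\pi_\lambda(\sigma_1^{t_1,t_2})$ or from a second oscillation in the ratio $t_1/t_2$, following the techniques of \cite{MR1924572}. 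This two-parameter summation, rather than the fixed-scale bound, is where the bulk of the technical work will lie.
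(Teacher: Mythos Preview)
Your proposal correctly identifies the crucial difficulty but does not resolve it, and the decomposition you choose is the wrong one for this purpose. The spectral Littlewood--Paley projectors $P_j$ associated to the sub-Laplacian (equivalently, the Hermite bands $|\lambda|(2m+1)\sim 2^j$) are isotropic with respect to the Heisenberg dilations; as you yourself observe, the resulting operator-norm bound for $\pi_\lambda(\sigma_1^{t_1,t_2})P_j$ can only see the product $t_1 t_2=2^{k_1+k_2}$, so the $\ell^2$ sum over the diagonal $k_1+k_2=\mathrm{const}$ diverges. Hoping for extra decay from off-diagonal Hermite matrix elements or from an ``oscillation in the ratio $t_1/t_2$'' is not a plan; the phase $\lambda\,\Phi(x,y)$ one obtains after the group Fourier transform genuinely depends only on $\lambda\,2^{k_1+k_2}$ after rescaling, so no such decay is available from that direction alone.

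The paper's remedy is to abandon the sub-Laplacian decomposition entirely and instead use \emph{coordinate-wise} Littlewood--Paley projections $\mathcal{L}^1_j$, $\mathcal{L}^2_j$ acting in the $x_1$- and $x_2$-directions separately (these still make sense as Heisenberg convolution operators; see \cite{MR1924572}). One writes
\[
\zeta_K\ast_J f=\zeta_K\ast_J\mathcal{L}^{1,\mathrm{loc}}_{k_1}f+\zeta_K\ast_J\mathcal{L}^{2,\mathrm{loc}}_{k_2}\mathcal{L}^{1,\mathrm{glo}}_{k_1}f+\sum_{\ell_1,\ell_2\ge 0}\zeta_K\ast_J\mathcal{L}^2_{k_2-\ell_2}\mathcal{L}^1_{k_1-\ell_1}f.
\]
After the group Fourier transform and rescaling, $\mathcal{L}^1_{k_1-\ell_1}$ becomes a Fourier cutoff to $|\xi|\sim 2^{\ell_1}$ and $\mathcal{L}^2_{k_2-\ell_2}$ becomes multiplication by $\varphi(\lambda 2^{k_1+k_2-\ell_2}y)$; these two localizations, combined with a $TT^*$ argument and van der Corput on the phase $(x+y)\sqrt{1-(x-y)^2}$, yield decay $2^{-\varepsilon(\ell_1+\ell_2)}$ in the two indices \emph{separately}, which is exactly what is needed to sum over $(k_1,k_2)$. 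The two ``local'' pieces are handled not by isolating $\lambda=0$ but by dominating them pointwise by a composition of a one-dimensional Hardy--Littlewood maximal function with a Euclidean lacunary maximal operator covered by Ricci--Stein \cite{AIF_1992__42_3_637_0}.

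Finally, the passage to $1<p<2$ is not done via vector-valued Calder\'on--Zygmund (the kernel here is a curve measure, not a kernel with the right regularity), but by the Nagel--Stein--Wainger bootstrap: once the maximal operator is known on $L^p$, a vector-valued interpolation gives the square-function estimate on $L^q$ for $\frac1q<\frac12(1+\frac1p)$, which feeds back into the decay estimate and pushes the range of $p$ down iteratively.
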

From Theorems \ref{3mt0} and Theorem \ref{3mt1}, it is natural to ask for the necessary and sufficient conditions on the matrix $ A $ for the operators $\mathcal{E}_{A}^1 $, $\mathcal{E}_{A}^2 $  to be bounded on $ L^p $.
 \begin{theorem}\label{3mt2}
Let $A\in \mathbb{M}_{2}(\mathbb{R})$ and $1<p<\infty$. 
The operators $\mathcal{E}_{A}^1$ are bounded on $L^p(\mathbb{R}^3)$ if and only if $A$ is not of the form $ \begin{pmatrix} c & 0 \\ 0 & c \end{pmatrix}$  for some $c\in \mathbb{R}\setminus \{0\}$. Moreover, the operators $\mathcal{E}_{A}^2$ are bounded on $L^p(\mathbb{R}^3)$ if and only if $A$ is not of the form $ \begin{pmatrix} c & 0 \\ 0 & c \cdot 2^{2a} \end{pmatrix}$  for some $c\in \mathbb{R}\setminus \{0\}$ and $a\in\mathbb{Z}$.
\end{theorem}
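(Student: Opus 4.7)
My plan is to deduce Theorem \ref{3mt2} by combining Theorem \ref{3mt0} (for necessity) with Theorem \ref{3mt1} (for sufficiency in the skew-symmetric sub-case), bridging the two via a shear transformation in $x_3$ together with a Fourier/Littlewood--Paley argument. The necessity direction is immediate: the forbidden matrices listed in Theorem \ref{3mt2} coincide exactly with those for which Theorem \ref{3mt0} produces unbounded counterexamples on every $L^p(\mathbb{R}^3)$.

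For sufficiency, decompose $A = A_s + A_a$ into symmetric and skew-symmetric parts. The key algebraic reduction is the shear $\Psi_B f(x, x_3) := f(x, x_3 - \tfrac{1}{2} x^t B x)$ for symmetric $B$, an $L^p(\mathbb{R}^3)$-isometry satisfying
\begin{equation*}
\Psi_B^{-1} E_{t_1, t_2}^A \Psi_B f(x, x_3) = \int_{S^1} f\bigl(x - z,\, x_3 - x^t (A - B) z - \tfrac{1}{2} z^t B z\bigr)\, d\sigma(y), \qquad z := (t_1 y_1, t_2 y_2).
\end{equation*}
Choosing $B = A_s$ identifies $\mathcal{E}_A^i$ with the same operator for the pure skew matrix $A_a$, modulo an $x$-independent but $(t_1, t_2, y)$-dependent shift $-\tfrac{1}{2} z^t A_s z$ of the third coordinate. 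Together with orthogonal rotations in $x$ (preserving $d\sigma$ in the circular case) and coordinate rescalings that permute the dyadic sup, this lets us normalize $A_s$ to a diagonal form $\operatorname{diag}(\lambda_1, \lambda_2)$.

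Passing to the partial Fourier transform in $x_3$ (dual variable $\tau$) converts the residual shift into an oscillating multiplier $e^{-i\tau \cdot \tfrac{1}{2} z^t A_s z}$ attached to the kernel of the Heisenberg-type operator $E^{A_a}_{t_1, t_2}$. A computation in $\theta \in [0, 2\pi)$ with $y = (\cos\theta, \sin\theta)$ shows that this phase on $S^1$ has non-degenerate critical points whenever $\lambda_1 t_1^2 \neq \lambda_2 t_2^2$; the non-forbidden hypothesis on $A$ is precisely what prevents this equality from holding at all dyadic pairs $(t_1, t_2) = (2^{k_1}, 2^{k_2})$: for $\mathcal{E}_A^1$ this reduces to $\lambda_1 \neq \lambda_2$ (non-scalar $A_s$), and for $\mathcal{E}_A^2$ to $\lambda_2/\lambda_1 \notin \{2^{2a}\}$, the only residual degenerate dyadic pairs being those at which $A_a \neq 0$ provides a Heisenberg fallback. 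A Littlewood--Paley decomposition in $x_3$ into dyadic bands $|\tau| \sim 2^j$ then splits the analysis: in bands with $|\tau| \max(t_1, t_2)^2 \lesssim 1$, the multiplier is $O(1)$ and Theorem \ref{3mt1} (or a spherical averaging bound when $A_a = 0$) controls the remaining operator; in bands with $|\tau| \max(t_1, t_2)^2 \gg 1$, stationary phase on $S^1$ from the $A_s$-multiplier or from the $A_a$-induced phase yields decay that is summable over dyadic scales via the square-function theory of \cite{MR1924572}.

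The main obstacle will be the mixed sub-case where both $A_s \neq 0$ and $A_a \neq 0$. The residual shift from the shear is scale-dependent and does not commute with the $\sup$ over dyadic $(k_1, k_2)$ in physical space, so Theorem \ref{3mt1} cannot be invoked as a black box. Especially delicate is the case $A_s = cI$ (so $A = cI + A_a$), where the residual phase $c|z|^2$ is $y$-independent on $S^1$ and supplies no stationary-phase decay at all; one must then rely entirely on the Heisenberg-group oscillation of $A_a$ combined with a careful Littlewood--Paley analysis to control the scale-dependent $x_3$-shift $c|z|^2/2$. Balancing these contributions and summing orthogonally across the dyadic scales is the central technical step.
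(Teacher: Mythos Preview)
Your shear reduction via $\Psi_{A_s}$ matches the paper's Lemma~\ref{l1}, and the necessity direction via Theorem~\ref{3mt0} is correct. But the sufficiency plan has two strategy-level gaps.

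First, the diagonalization of $A_s$ by an orthogonal rotation in $x$ is only available for $\mathcal{E}_A^1$: for $\mathcal{E}_A^2$ a rotation sends the axis-aligned ellipse $(2^{k_1}\cos\theta,2^{k_2}\sin\theta)$ to a tilted ellipse outside the family, and coordinate rescalings $\mathrm{diag}(a,b)$ conjugate $A_s$ to $\mathrm{diag}(a,b)\,A_s\,\mathrm{diag}(a,b)$, which does not diagonalize a general symmetric matrix. So when $A_s$ has off-diagonal entry $e\neq 0$, your criterion ``$\lambda_1 t_1^2\neq \lambda_2 t_2^2$'' is never reached and the two-parameter argument is incomplete. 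Second, and you already flag this, the scale-dependent shift $-\tfrac12 z^tA_sz$ prevents any black-box use of Theorem~\ref{3mt1}: the multiplier $e^{-i\tau\frac12 z^tA_sz}$ has modulus $1$ at every frequency, and bounding its $y$-derivatives on low bands does not plug into the proof of Theorem~\ref{3mt1}, which lives at the level of specific oscillatory integral kernels on the group Fourier side. Carrying your plan out would in effect force you to redo that $L^2$ analysis for the perturbed phase.

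That is precisely the route the paper takes, and it bypasses both problems. When $A_w\neq 0$ the paper does \emph{not} reduce to Theorem~\ref{3mt1}; it reruns the entire group-Fourier/oscillatory-integral machinery of Sections~\ref{dcysection}--\ref{bstsection} for the modified phase $\Phi^A(x,y)$ in \eqref{k4}, which already contains the parameters $b,d,e$ of $A_s$ (Proposition~\ref{prop45} in Section~\ref{GAsec}). When $A_w=0$ it takes the full Euclidean Fourier transform in $(x_1,x_2,x_3)$ and applies van~der~Corput to the four-term phase $\eta\cdot(\cos\theta,\sin\theta,\cos 2\theta,\sin 2\theta)$, equation~\eqref{12kk}, feeding the resulting decay into the Ricci--Stein multiparameter maximal theorem together with a further dyadic decomposition in $\xi_3$ to handle the coefficient $b\,2^{2k_1}-d\,2^{2k_2}$. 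In short, the paper treats the $A_s$-contribution as part of the phase from the outset rather than as a residual shift to be controlled afterwards.
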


\begin{remark}
An interesting point is that the $L^p$ boundedness of $\mathcal{E}_{A}^1$ and $\mathcal{E}_A^2$ can be determined by the matrix $A$ given that the $L^p$ unboundedness is rarely observed for lacunary maximal operators.
\end{remark}

\subsection*{Historical remark}
In 1976, Stein studied the spherical maximal function, proving that the operator defined by
$
Mf(x) := \sup_{r>0} \left| f \ast d\sigma_{d-1}^r \right|
$
is bounded on $ L^p(\mathbb{R}^d) $ for $ p > \frac{d}{d-1} $ when $ d \geq 3 $. Here, $ d\sigma_{d-1}^r $ denotes the normalized surface measure on the sphere $ rS^{d-1} = \{ x \in \mathbb{R}^d : |x| = r \} $. The case $ d = 2 $ was later resolved by Bourgain in 1986 \cite{MR0874045}. Around the same time, Calder\'on investigated lacunary maximal functions \cite{MR537803}, defined by $
M_Lf(x) := \sup_{k \in \mathbb{Z}} | f \ast d\sigma_{d-1}^{2^k} |,$
proving that this operator is bounded on $ L^p(\mathbb{R}^d) $ for $ 1 < p \leq \infty $. These results naturally led to questions regarding spherical averages on the Heisenberg group $ \mathbb{H}^n $. In 1997, Nevo and Thangavelu \cite{MR1448717} studied the spherical means $ f \ast_J d\sigma_{2n-1}^r $ on $ \mathbb{H}^n $, obtaining $ L^p $ estimates for $ \sup_{r > 0} | f \ast_J d\sigma_{2n-1}^r | $. Optimal ranges for $ p $ were independently determined by M\"uller and Seeger \cite{MR2063040}, and Narayanan and Thangavelu \cite{MR2121541}. However, the corresponding estimate on $ \mathbb{H}^1 $ remains open.
 For results restricted to Heisenberg radial functions, see \cite{MR4453958} and \cite{MR4566682}. 
 
 On the other hand, M\"uller and Seeger \cite{MR2063040} studied spherical maximal functions in the setting of M\'etrivier groups, which can be seen as tilted spherical maximal functions on the Heisenberg group. This type of maximal function has been extensively studied in \cite{MR4361901}, \cite{MR4523247}.  There also has been considerable research extending the study to two-step nilpotent groups. Refer to \cite{kim2020annulusmaximalaveragesvariable}, \cite{MR4581163} and \cite{MR4774069}. 
 
 The study of lacunary spherical maximal functions on $ \mathbb{H}^n $ for $ n \geq 2 $ was addressed by Bagchi, Hait, Roncal, and Thangavelu in \cite{MR4250270}, while the case $ n = 1 $ was covered by Roos, Seeger and Srivastava \cite{MR4523247} using $ L^p $-improving estimates for spherical averages. More recently, Sheri, Hickman, and Wright \cite{MR4669588} obtained $ L^p $ estimates for lacunary maximal functions on general homogeneous groups under appropriate curvature conditions. 
\subsection*{This paper} 
Our approach, similar to that of \cite{MR4250270, MR4170795}, employs the group Fourier transform on the Heisenberg group. However, our proof differs as we do not utilize spectral analysis but instead directly compute oscillatory integral operators. Moreover, we handle multiparameter lacunary maximal operators by applying the Littlewood-Paley projection operators corresponding to the Heisenberg group, based on \cite{MR1924572}.\\
\text{ } In Section \ref{gftlp}, we explain the group Fourier transform on the Heisenberg group and the various properties needed for the proof. In Section \ref{3strpf}, we outline the structure of the proof for Theorem \ref{3mt1}. We then prove Theorem \ref{3mt1} in Sections \ref{dcysection} and \ref{bstsection}, and finally, we prove Theorem \ref{3mt0} in Section \ref{Asect} and Theorem \ref{3mt2} in Section \ref{GAsec}.

\subsection*{Notation} Let $\psi:\bR\rightarrow \bR$ be a non-negative $C^{\infty}$ function supported on $[-2,2]$ such that $\psi\equiv 1$ on $[-1,1]$. Define $\varphi(t)=\psi(t)-\psi(2t)$. Also, define $\psi^c(t)=1-\psi(t)$. Note that
	$\sum_{\l\in \bZ} \varphi\left(\frac{t}{2^l}\right)=1\ \text{for }t\neq0$ and $\text{supp}(\varphi)\subset \left\{\frac{1}{2}\leq|t|\leq2\right\}$.
We shall use the notation $A\lesssim B$ when $A\leq CB$ with a constant $C>0$. Moreover, we write $A\approx B$, if $A\lesssim B$ and $B\lesssim A$. 
We denotes the convolution of $f$ and $g$ by $f\ast_J g$ on the Heisenberg group and $f\ast g$ on Euclidean space, which mean $		f\ast_J g(x):=\int f(x\cdot_Jy^{-1})g(y)dy$ and $		f\ast g(x):=\int f(x-y)g(y)dy$, respectively.

\section{Group Fourier transform and Littlewood-Paley Theorem on the Heisenberg group}\label{gftlp}
We define the group Fourier transform of $f\in L^1(\mathbb{H}^n)\bigcap L^2(\mathbb{H}^n)$ as an operator--valued mapping from $\mathbb{R}^1$ to the space of bounded operators on $L^2(\mathbb{R}^n)$ such that $\lambda\in \mathbb{R}^1  \mapsto [\hat{f}(\lambda)\phi](x)$,
\begin{align*}
[\hat{f}(\lambda)\phi](x)=\int_{\bR^1}\int_{\mathbb{R}^{2n}} e^{-2\pi i \lambda (q\xi-\frac{pq}{2}+\frac{s}{4})}\phi(\xi-p)f(p,q,s)dpdqds.
\end{align*} 
Making the change of variables gives
\begin{align}\label{gf}
[\hat{f}(\lambda)\phi](x)=\int_{\mathbb{R}^n}{\mathcal{F}}^{2,3}f\left(x-y,\frac{\lambda(x+y)}{2},\frac{\lambda}{4}\right)	\phi(y)dy,
\end{align}
where $\mathcal{F}^{2,3}$ is the Euclidean Fourier transform with respect to the second and third component of $f$.
Note that 
\begin{align}\label{sep}
	[\widehat{f\ast_J g}(\lambda)\phi](x)=\left[\hat{f}(\lambda)[\hat{g}(\lambda)\phi]\right](x)
\end{align}
for $f,g\in L^1(\mathbb{H}^n)$.
The operator $\hat{f}(\lambda)$ is actually a Hilbert--Schmidt operator and the following Plancherel theorem holds.
\begin{align*}
	\|f\|_{L^2(\mathbb{H}^n)}=\frac{1}{4}\int_{\mathbb{R}^1}\|\hat{f}(\lambda)\|^2_{HS}|\lambda|^nd\lambda,
\end{align*}
where $\|\cdot\|_{HS}$ is a Hilbert--Schmidt norm. The important thing is that the $L^2$ boundedness of the convolution operators in $\mathbb{H}^1$ can be estimated by the following proposition.
\begin{proposition}\label{H2}
	Let $G$ be a operator defined by $Gf=k\ast_J f$ on $L^2(\mathbb{H}^1).$
	Then 
	\begin{align*}
		\widehat{Gf}(\lambda)\phi=\hat{k}(\lambda)[\hat{f}(\lambda)\phi],
	\end{align*}
for all $\phi\in L^2(\mathbb{R}^1)$. Moreover, the operator norm of $G$ is given by
\begin{align*}
	\|G\|_{L^2(\mathbb{H}^1)\mapsto L^2(\mathbb{H}^1)}=\frac{1}{2}\|\hat{k}(\lambda)\|_{L^2(\mathbb{R}^1)\mapsto L^2(\mathbb{R}^1)}.
\end{align*}
\end{proposition}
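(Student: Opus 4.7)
My first observation is that $\widehat{Gf}(\lambda)\phi = \hat{k}(\lambda)[\hat{f}(\lambda)\phi]$ is an immediate instance of the convolution formula \eqref{sep} obtained by setting $g = f$ and writing $k$ in place of the first factor; no independent argument is needed. The substance of the proposition thus lies entirely in the operator-norm identity, which I plan to prove by matching upper and lower bounds.

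For the upper bound, I will combine the just-established first part with the Plancherel theorem on $\mathbb{H}^1$ to get
$$\|Gf\|_{L^2(\mathbb{H}^1)}^2 = \frac{1}{4}\int_\mathbb{R} \|\hat{k}(\lambda)\hat{f}(\lambda)\|_{HS}^2\,|\lambda|\,d\lambda,$$
then apply the pointwise ideal inequality $\|AB\|_{HS} \le \|A\|_{L^2\to L^2}\|B\|_{HS}$ with $A=\hat{k}(\lambda)$ and $B=\hat{f}(\lambda)$, pull $\mathrm{ess\,sup}_\lambda\|\hat{k}(\lambda)\|_{L^2\to L^2}$ outside the integral, and apply Plancherel again to recover $\|f\|_{L^2(\mathbb{H}^1)}^2$ from the remaining $\hat{f}(\lambda)$ factor. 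The numerical constant $\tfrac{1}{2}$ asserted in the statement then drops out by matching with the $\tfrac{1}{4}$ in the Plancherel normalization.

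For the matching lower bound, the plan is to construct, for each $\varepsilon>0$, a test function $f$ whose group Fourier transform is concentrated on a $\lambda$-slice where $\|\hat{k}(\lambda)\|_{L^2\to L^2}$ is near-maximal. Concretely, I would fix $\lambda_0$ with $\|\hat{k}(\lambda_0)\|_{L^2\to L^2}$ within $\varepsilon$ of the essential supremum and pick a unit vector $\phi\in L^2(\mathbb{R})$ with $\|\hat{k}(\lambda_0)\phi\|$ almost maximal; then build $f\in L^2(\mathbb{H}^1)$ whose group Fourier transform $\hat{f}(\lambda)$ is a rank-one operator of the form $\phi\otimes\psi_\lambda$ supported on a short interval $I\ni\lambda_0$ and zero elsewhere. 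Realizing this prescription requires the surjectivity of the group Fourier transform onto Hilbert-Schmidt-valued fields in $L^2(\mathbb{R},|\lambda|\,d\lambda)$, which is part of the Plancherel/inversion theorem for $\mathbb{H}^1$. Letting $|I|\to 0$ and then $\varepsilon\to 0$ will then close the gap.

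The main obstacle I anticipate lies in this last step: one needs enough regularity of $\lambda\mapsto \hat{k}(\lambda)$ (for instance weak continuity of the convolver field for $k$ in a dense class such as Schwartz) to guarantee that the near-maximization at $\lambda_0$ persists uniformly on the small interval $I$, and one needs to verify cleanly that the prescribed rank-one symbol is the Fourier transform of an honest $f\in L^2(\mathbb{H}^1)$. By contrast, the upper bound is a routine manipulation combining the Hilbert-Schmidt ideal inequality with the two applications of Plancherel.
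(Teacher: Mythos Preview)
The paper does not actually prove this proposition: immediately after the statement it simply refers the reader to \cite{MR1744778} and \cite{MR1232192} for details. So there is no in-paper argument to compare against, and your outline is the standard route one finds in those references: use \eqref{sep} for the first assertion, then Plancherel together with the ideal inequality $\|AB\|_{HS}\le \|A\|_{op}\|B\|_{HS}$ for the upper bound, and a concentration/near-extremizer argument on the Fourier side for the lower bound.

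There is, however, one concrete slip in your reasoning. You write that ``the numerical constant $\tfrac12$ asserted in the statement then drops out by matching with the $\tfrac14$ in the Plancherel normalization.'' It does not. In your upper-bound computation the Plancherel identity is applied twice, once to $\|Gf\|_{L^2}^2$ and once to $\|f\|_{L^2}^2$, and the factor $\tfrac14$ appears identically on both sides and cancels:
\[
\|Gf\|_{L^2}^2=\tfrac14\!\int\!\|\hat k(\lambda)\hat f(\lambda)\|_{HS}^2|\lambda|\,d\lambda
\le \Big(\operatorname*{ess\,sup}_{\lambda}\|\hat k(\lambda)\|_{op}^2\Big)\cdot \tfrac14\!\int\!\|\hat f(\lambda)\|_{HS}^2|\lambda|\,d\lambda
=\Big(\operatorname*{ess\,sup}_{\lambda}\|\hat k(\lambda)\|_{op}\Big)^2\|f\|_{L^2}^2.
\]
Thus your argument yields $\|G\|_{op}\le \operatorname*{ess\,sup}_\lambda\|\hat k(\lambda)\|_{op}$ with no prefactor, and the matching lower bound gives equality with constant $1$. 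If a factor $\tfrac12$ is genuinely present it must come from a discrepancy between the paper's definition \eqref{gf} of $\hat k(\lambda)$ and the standard Schr\"odinger-representation convention used in the cited references (note the $\lambda/4$ in the third slot of \eqref{gf}), not from the Plancherel constant. You should track the normalizations carefully rather than assert that the $\tfrac12$ ``drops out.''

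Your lower-bound plan is fine in principle; the regularity issue you flag is real but mild, since for $k\in L^1(\mathbb{H}^1)$ the map $\lambda\mapsto \hat k(\lambda)$ is strongly continuous, which is all you need to make the localization near $\lambda_0$ work.
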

See \cite{MR1744778} and \cite{MR1232192} for details of the proof.\\
For $j\in \bZ$, let us define 
\begin{align}\label{fdec}
\begin{split}
	L_j^1(y)=\mathcal{F}^{-1}[\varphi(2^j\cdot)](y_1)\delta(y_2)\delta(y_3),\\
	L_j^2(y)=\mathcal{F}^{-1}[\varphi(2^j\cdot)](y_2)\delta(y_1)\delta(y_3).
\end{split}
\end{align}
where $\delta$ is a dirac measure on $\mathbb{R}^2$. 
Set 
\begin{align*}
\mathcal{L}_{j}^\nu f:=L_{j}^{\nu}\ast_J f
\end{align*}
for $\nu=1,2$ and $\mathcal{L}_{k}^{\nu,\rm{loc}}f=\sum_{k}^{\infty}L_{j}^{\nu}\ast_{J} f$, $ \mathcal{L}_{k}^{\nu,\rm{glo}}f=f-\mathcal{L}_{k}^{\nu,\rm{loc}}f.$

In the setting of the Heisenberg group, an analogue version of the Littlewood-Paley theorem is established as follows:

\begin{lemma}\label{lpt}
For $1<p<\infty$ and $\nu=1,2$, there exists a constant $C_p$ such that 
\begin{align*}
&\left\|\left(\sum_{k\in \mathbb{Z}}|\mathcal{L}_k^{\nu}f|^2\right)^{\frac{1}{2}}\right\|_{L^p(\mathbb{H}^1)}\leq C_p \|f\|_{L^{p}(\mathbb{H}^1)}.
\end{align*}

\end{lemma}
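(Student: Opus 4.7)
The plan is to unwind the group convolution $L_j^\nu *_J f$ slice by slice into a classical one-directional Euclidean Littlewood--Paley projection in a $2$-plane, and then to invoke the standard one-dimensional square-function inequality.

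First, I would compute the convolution explicitly. Since $L_j^1$ is supported on the $y_1$-axis, the Heisenberg product $z^{-1} \cdot_J x$ with $z = (z_1, 0, 0)$ gives $(x_1 - z_1, x_2, x_3 + 2 z_1 x_2)$, so
\begin{equation*}
\mathcal{L}_j^1 f(x_1,x_2,x_3) \;=\; \int_{\mathbb{R}} 2^{-j}\check{\varphi}(z/2^j)\, f\bigl(x_1 - z,\, x_2,\, x_3 + 2 z x_2\bigr)\, dz,
\end{equation*}
where $\check{\varphi} = \mathcal{F}^{-1}\varphi$. Taking the Euclidean Fourier transform in $(x_1,x_3)$ with $x_2$ held fixed converts this into the multiplier action
\begin{equation*}
\mathcal{F}^{1,3}\bigl[\mathcal{L}_j^1 f\bigr](\xi_1,x_2,\xi_3) \;=\; \varphi\bigl(2^j(\xi_1 - 2 x_2 \xi_3)\bigr)\, \mathcal{F}^{1,3} f(\xi_1,x_2,\xi_3).
\end{equation*}
The analogous computation for $\nu=2$ produces the multiplier $\varphi(2^j(\xi_2 + 2 x_1 \xi_3))$ in $(\xi_2,\xi_3)$ with $x_1$ frozen.

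Second, I would recognise $\varphi(2^j(\xi_1 - 2x_2 \xi_3))$ as a one-directional Littlewood--Paley multiplier in the $(\xi_1,\xi_3)$-plane along the direction $(1,-2x_2)$. Rotating coordinates in the $(x_1,x_3)$-plane so that this direction becomes the first axis, the slice operator reduces to the classical $1$D Littlewood--Paley projection applied in the new first variable. Combining the $1$D square-function inequality with Fubini in the complementary direction then yields, for each fixed $x_2$,
\begin{equation*}
\left\|\Bigl(\sum_{j\in\mathbb{Z}} |\mathcal{L}_j^1 f(\cdot,x_2,\cdot)|^2\Bigr)^{1/2}\right\|_{L^p(\mathbb{R}^2_{x_1,x_3})} \;\leq\; C_p\, \|f(\cdot,x_2,\cdot)\|_{L^p(\mathbb{R}^2_{x_1,x_3})},
\end{equation*}
with $C_p$ independent of $x_2$. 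Raising to the $p$-th power and integrating in $x_2$ finishes the case $\nu=1$; the case $\nu=2$ is symmetric after interchanging the roles of $x_1$ and $x_2$.

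The one delicate step is the uniformity of $C_p$ as $x_2$ ranges over $\mathbb{R}$. After the rotation, the multiplier becomes $\varphi(2^j r(x_2)\, \eta)$ in the new first frequency coordinate $\eta$, where $r(x_2) = \sqrt{1 + 4 x_2^2}$, so the Littlewood--Paley family $\{\varphi(2^j r \cdot)\}_{j \in \mathbb{Z}}$ is simply a dyadic decomposition on a scale shifted by $\log_2 r$. Its vector-valued $L^p$ norm is controlled by a standard Mihlin / vector-valued Calder\'on--Zygmund argument, uniformly in $r>0$; this is the main place requiring care, and it is the only non-routine ingredient. Once this uniform slicewise bound is in hand, Fubini in the remaining variable closes the proof.
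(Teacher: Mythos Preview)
The paper does not actually prove this lemma; it simply refers the reader to \cite{MR1924572}. Your proposal, by contrast, gives a self-contained argument, and it is correct: the computation of $\mathcal{L}_j^1 f$ as a convolution along the line $(1,-2x_2)$ in the $(x_1,x_3)$-plane is right, the resulting multiplier $\varphi\bigl(2^j(\xi_1-2x_2\xi_3)\bigr)$ is indeed a one-directional dyadic projection, and the slicewise square-function bound plus Fubini finishes the proof.

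One simplification removes the only step you flagged as delicate. Instead of rotating the $(x_1,x_3)$-plane (which produces the scale factor $r(x_2)=\sqrt{1+4x_2^2}$ whose harmlessness you then have to justify), apply the measure-preserving shear $(x_1,x_3)\mapsto(x_1,\,x_3+2x_2 x_1)$. Writing $h(u_1,u_3)=f(u_1,x_2,u_3-2x_2 u_1)$, one gets
\[
\mathcal{L}_j^1 f(x_1,x_2,x_3)=\bigl(2^{-j}\check\varphi(2^{-j}\cdot)\ast_1 h\bigr)(x_1,\,x_3+2x_2 x_1),
\]
a pure convolution in the first variable. Since the shear is an $L^p$-isometry, the slice inequality is literally the classical one-dimensional Littlewood--Paley bound, with constant $C_p$ manifestly independent of $x_2$. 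This replaces your rotation-plus-uniformity argument by a single change of variables.
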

The proof of this lemma can be found in \cite{MR1924572}.	
To further study harmonic analysis on the Heisenberg group, refer to \cite{MR1633042} and Chapter $12$ of \cite{MR1232192}.

\section{Structure of the proof for Theorem \ref{3mt1}}\label{3strpf}
Let $K=(k_1,k_2)\in \mathbb{Z}^2$ and denote $\theta_K=(2^{k_1}\cos \theta,2^{k_2}\sin \theta).$ To prove Theorem \ref{3mt1}, we shall estimate the $L^p$ boundedness of the operator $$\mathcal{E}^{J}_{2^{k_1},2^{k_2}}f(x,x_3):=\sup_{K\in \bZ^2}\int_{[0,2\pi]}|f((x,x_3)\cdot_{J}({\theta}_K,0)^{-1})|d\theta.$$ We decompose the integral $\int_{[0,2\pi]}d\theta$ into four intervals: $\int_{[{\frac{\pi}{4}},{\frac{3\pi}{4}}]}$, $\int_{\frac{\pi}{2}+[{\frac{\pi}{4}},{\frac{3\pi}{4}}]}$, $\int_{{\pi}+[{\frac{\pi}{4}},{\frac{3\pi}{4}}]}$, and $\int_{\frac{3\pi}{2}+[{\frac{\pi}{4}},{\frac{3\pi}{4}}]}$. Using an appropriate change of variables $\theta\mapsto\theta+\pi/2$, we observe that the operator norms over each interval are identical. Therefore,  to complete the argument, it is sufficient to perform the analysis on just $[\frac{\pi}{4}, \frac{3\pi}{4}]$ as in
\begin{align*}
	\sup_{K\in \bZ^2}\int_{[\frac{\pi}{4}, \frac{3\pi}{4}]}|f((x,x_3)\cdot_{J}({\theta}_K,0)^{-1})|d\theta.
\end{align*}
This expression can be written as $\sup_K|\zeta_K*_Jf(x,x_3)|$, where the measure $\zeta_K$ is defined as
\begin{align}\label{022m}
\langle \zeta_K,f \rangle= \int_{[\frac{\pi}{4}, \frac{3\pi}{4}]} f\left(2^{k_1}\cos\theta, 2^{k_2}\sin\theta,0\right) d\theta\ \text{for $f\in \mathcal{S}(\mathbb{R}^3)$}.
\end{align}
Recall that $\mathcal{L}_{k}^{\nu,\rm{loc}}f=\sum_{k}^{\infty}L_{j}^{\nu}\ast_{J} f$, $ \mathcal{L}_{k}^{\nu,\rm{glo}}f=f-\mathcal{L}_{k}^{\nu,\rm{loc}}f$ and $\mathcal{L}_{j}^\nu f:=L_{j}^{\nu}\ast_J f.$
To prove our main results, we decompose 
\begin{align}\label{md}
	\zeta_{K}\ast_J f=\zeta_{K}\ast_J\mathcal{L}_{k_1}^{1,\rm{loc}}f+\zeta_{K}\ast_J\mathcal{L}_{k_2}^{2,\rm{loc}}\mathcal{L}_{k_1}^{1,\rm{glo}}f+\sum_{\ell_1,\ell_2=0}^{\infty}\zeta_{K}\ast_J \mathcal{L}_{k_2-\ell_2}^2\mathcal{L}_{k_1-\ell_1}^{1}f.
\end{align}
To handle the first and second terms, we will prove Proposition \ref{loc}. In the proof, we control these terms through the composition of two types of maximal operators.\begin{proposition}\label{loc}
For each \( 1 < p \leq \infty \), there exists a constant \( C_p \) such that
\[
\left\| \sup_{K} \left| \zeta_{K} \ast_J \mathcal{L}_{k_1}^{1,\mathrm{loc}} f \right| \right\|_{L^p(\mathbb{H}^1)} + \left\| \sup_{K} \left| \zeta_{K} \ast_J \mathcal{L}_{k_2}^{2,\mathrm{loc}} \mathcal{L}_{k_1}^{1,\mathrm{glo}} f \right| \right\|_{L^p(\mathbb{H}^1)} \leq C_p \| f \|_{L^p(\mathbb{H}^1)}.
\]
\end{proposition}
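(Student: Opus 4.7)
The plan is to pointwise dominate each term by a composition of two single-parameter maximal operators, each bounded on $L^p(\mathbb{H}^1)$ for $1<p\leq\infty$.

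For the first term, I would write $\mathcal{L}_{k_1}^{1,\mathrm{loc}} f = \Phi_{k_1} \ast_J f$ with $\Phi_{k_1}(y) = \phi_{k_1}(y_1)\delta(y_2)\delta(y_3)$ and $\widehat{\phi_{k_1}}(\xi_1) = \psi(2^{k_1}\xi_1)$ (telescoping (\ref{fdec})). By associativity $\zeta_K \ast_J \mathcal{L}_{k_1}^{1,\mathrm{loc}} f = (\zeta_K \ast_J \Phi_{k_1}) \ast_J f$, and an explicit computation of the Heisenberg group law gives
\[
\zeta_K \ast_J \mathcal{L}_{k_1}^{1,\mathrm{loc}} f(x,x_3) = \int_{\pi/4}^{3\pi/4}\int \phi_{k_1}(u)\, f(y_{\theta,u})\, du\, d\theta,
\]
where $y_{\theta,u} = \bigl(x_1 - 2^{k_1}\cos\theta - u,\, x_2 - 2^{k_2}\sin\theta,\, x_3 + 2\cdot 2^{k_1}\cos\theta\cdot x_2 - 2 x_1\cdot 2^{k_2}\sin\theta + 2u(x_2 - 2^{k_2}\sin\theta)\bigr)$. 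The crucial structural feature is that the $u$-linear slope $2(x_2 - 2^{k_2}\sin\theta)$ in the $x_3$-coordinate matches exactly the $X_1$-compatible slope at the shifted base point $\bigl(\cdot,\, x_2 - 2^{k_2}\sin\theta,\, \cdot\bigr)$, so that for each fixed $\theta$ the inner $u$-integration is a $1$-dimensional smooth average along the left-invariant $X_1$-vector field of $\mathbb{H}^1$ at scale $2^{k_1}$.

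Consequently, the $u$-integration is pointwise dominated by the directional Hardy-Littlewood maximal operator $\mathfrak{M}^{X_1}$ on $\mathbb{H}^1$, which is $L^p$-bounded for $1<p\leq\infty$ (by Fubini on $x_2$-slices it reduces to a $1$D Euclidean HL maximal along a line of slope $-2x_2$ in the $(x_1, x_3)$-plane). Inserting this pointwise bound under the $\theta$-integral and taking $\sup_K$, the right-hand side reduces (after absorbing the residual $k_1$-dependent $x_3$-translation via the center-translation invariance of $\mathbb{H}^1$) to a single-parameter lacunary Calder\'on-type maximal operator $\mathfrak{N}^{\mathrm{lac}}$ in the $k_2$-direction applied to $\mathfrak{M}^{X_1} f$. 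Since $\mathfrak{N}^{\mathrm{lac}}$ averages over $1$D arcs of diameter $\sim 2^{k_2}$ at dyadic scales, it is $L^p$-bounded for $1<p\leq\infty$ by the classical lacunary maximal theory. Composing yields $\sup_K |\zeta_K \ast_J \mathcal{L}_{k_1}^{1,\mathrm{loc}} f| \lesssim \mathfrak{N}^{\mathrm{lac}}(\mathfrak{M}^{X_1} |f|)$, giving the first estimate. The second term is handled by the symmetric argument exchanging $x_1 \leftrightarrow x_2$ and $X_1 \leftrightarrow X_2$; the additional factor $\mathcal{L}_{k_1}^{1,\mathrm{glo}}$ contributes only a $k_1$-uniformly $L^p$-bounded Littlewood-Paley projection and is absorbed via Lemma \ref{lpt}.

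The main obstacle will be the Heisenberg twist: the $x_3$-coordinate of the convolution argument contains the $(k_1,k_2)$-coupling cross-term $2\cdot 2^{k_1}\cos\theta\cdot x_2 - 2 x_1\cdot 2^{k_2}\sin\theta$, which survives after the $u$-integration is absorbed into $\mathfrak{M}^{X_1}$. One must carefully verify that the residual $k_1$-dependent shift is compatible with the translation-invariance of $\mathfrak{N}^{\mathrm{lac}}$ along the center --- that is, it produces a true family of Heisenberg left-translates at scale $2^{k_2}$, so that the resulting operator is genuinely lacunary in $k_2$ uniformly in $k_1$. A related technical point is that the $\mathfrak{M}^{X_1}$-domination is only useful because $u$-averaging is, by the matching-slope observation, an honest $X_1$-direction average and not a $(k_2, \theta)$-dependent shear that would force the use of a strong maximal function (which is not bounded on $L^1$).
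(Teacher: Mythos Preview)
Your overall strategy---dominate the inner smoothing by a Hardy--Littlewood type maximal function and then control the residual $\theta$-average by a lacunary maximal operator---is exactly the paper's. Your computation of $y_{\theta,u}$ and the ``matching slope'' observation are correct, and this is essentially what the paper's conjugation by $\mathcal{D}_{-2}$ accomplishes. The gap is in the next step.

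After the $u$-integration is absorbed, the base point $y_{\theta,0}$ still carries the term $2\cdot 2^{k_1}x_2\cos\theta$ in the third coordinate (and $-2^{k_1}\cos\theta$ in the first). This is \emph{not} a center translation: it depends on $\theta$ and on $x_2$. Consequently the residual operator
\[
\sup_{K}\int_{\pi/4}^{3\pi/4} g\bigl(x_1-2^{k_1}\cos\theta,\; x_2-2^{k_2}\sin\theta,\; x_3+2\cdot 2^{k_1}x_2\cos\theta-2\cdot 2^{k_2}x_1\sin\theta\bigr)\,d\theta
\]
is not a single-parameter lacunary operator in $k_2$; indeed, written this way it is just $\sup_K|\zeta_K\ast_J g|$ again, so your reduction is circular. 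The ``obstacle'' you flag in your last paragraph is real and is not removable by center translation.

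The paper's resolution is to conjugate by $\mathcal{D}_{-2}$ so that the third coordinate becomes independent of the smoothing variable, apply the \emph{Euclidean} $M^1$ in the first variable (which kills the first-coordinate $k_1$-dependence outright), and then conjugate by $\mathcal{D}_{4}$. After this the third coordinate reads $x_3-2x_1x_2+2^{k_1+k_2+1}\cos\theta\sin\theta$, so the remaining operator is the genuinely \emph{two}-parameter lacunary maximal function
\[
\mathcal{M}h(x)=\sup_{k_2,k_3\in\mathbb{Z}}\int_{\pi/4}^{3\pi/4}\bigl|h(x_1,\,x_2-2^{k_2}\sin\theta,\,x_3-2^{k_3}\cos\theta\sin\theta)\bigr|\,d\theta.
\]
This is not covered by Calder\'on's single-parameter theorem; the paper invokes Theorem~3.2 of Ricci--Stein \cite{AIF_1992__42_3_637_0} on multiparameter lacunary maximal operators to conclude. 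Your argument needs this two-parameter input.

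A smaller point: for the second term you cannot simply ``absorb $\mathcal{L}_{k_1}^{1,\mathrm{glo}}$ via Lemma~\ref{lpt}'', since that lemma gives a square-function estimate, not a uniform-in-$k_1$ pointwise bound, and the supremum over $k_1$ is still outside. The paper instead writes $\mathcal{L}_{k_1}^{1,\mathrm{glo}}=I-\mathcal{L}_{k_1}^{1,\mathrm{loc}}$ and treats the two pieces by the same method (with the roles of the coordinates swapped).
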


To handle the third term of \eqref{md}, we shall show the following estimate.
 \begin{proposition}\label{3dcy}
For each \(1 < p < \infty\), there exists a constant \(C_p > 0\) such that
\begin{align}\label{4.2}
	\bigg\|\bigg(\sum_{k_1,k_2} \big|\zeta_{K} \ast_J \mathcal{L}_{k_2 - \ell_2}^{2} \mathcal{L}_{k_1 - \ell_1}^{1} f \big|^2 \bigg)^{\frac{1}{2}} \bigg\|_{L^p(\mathbb{H}^1)} \leq C_p \, 2^{-\varepsilon(\ell_1 + \ell_2)} \bigg\|\bigg(\sum_{k_1,k_2} \big|\mathcal{L}_{k_2 - \ell_2}^{2} \mathcal{L}_{k_1 - \ell_1}^{1} f \big|^2 \bigg)^{\frac{1}{2}} \bigg\|_{L^p(\mathbb{H}^1)}.
\end{align}
\end{proposition}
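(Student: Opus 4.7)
The plan is to prove \eqref{4.2} by combining a trivial vector-valued $L^p$ bound (no gain in $\ell_1,\ell_2$) with an $L^2$ bound that already carries the full gain $2^{-\varepsilon_0(\ell_1+\ell_2)}$, and then interpolating. Setting $g_K := \mathcal{L}_{k_2-\ell_2}^{2}\mathcal{L}_{k_1-\ell_1}^{1}f$, the inequality to prove reads
\[
\Big\| \big(\textstyle\sum_K |\zeta_K \ast_J g_K|^{2}\big)^{1/2} \Big\|_{L^p(\mathbb{H}^1)} \lesssim 2^{-\varepsilon(\ell_1+\ell_2)} \Big\| \big(\textstyle\sum_K |g_K|^{2}\big)^{1/2} \Big\|_{L^p(\mathbb{H}^1)}.
\]

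For the $L^2$ step with gain, Plancherel on $\mathbb{H}^1$ and Proposition \ref{H2} reduce the square-function inequality to the pointwise-in-$\lambda$ operator-norm bound
\[
\bigl\| \widehat{\zeta_K}(\lambda)\, \widehat{L_{k_2-\ell_2}^{2}}(\lambda)\, \widehat{L_{k_1-\ell_1}^{1}}(\lambda) \bigr\|_{L^2(\mathbb{R})\to L^2(\mathbb{R})} \lesssim 2^{-\varepsilon_0(\ell_1+\ell_2)}.
\]
A direct computation from \eqref{gf} shows that $\widehat{L_{j}^{1}}(\lambda)$ is Euclidean convolution with a bump localized in Fourier at scale $2^{-j}$, $\widehat{L_{j}^{2}}(\lambda)$ is multiplication by $\varphi(2^{j}\lambda x)$, and $\widehat{\zeta_K}(\lambda)$ is an integral operator whose kernel is an oscillatory integral in $\theta\in[\pi/4,3\pi/4]$ with phase proportional to $\lambda\,2^{k_2}(x+y)\sqrt{1-((x-y)/2^{k_1})^{2}}$ and amplitude supported in $|x-y|\lesssim 2^{k_1}$. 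After inserting the two cutoffs one checks that the $\theta$-derivative of the phase has size $\gtrsim 2^{\ell_1}+2^{\ell_2}$ on the relevant region; Van der Corput's lemma then delivers pointwise kernel decay of order $(2^{\ell_1}+2^{\ell_2})^{-1/2}$, and Schur's test packages this into the desired operator-norm bound uniformly in $\lambda$.

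For the trivial $L^p$ step, I would dominate $\zeta_K$ pointwise by a suitably smoothed averaging operator adapted to the anisotropic box of dimensions $2^{k_1}\times 2^{k_2}\times \text{(Heisenberg twist)}$, obtaining $|\zeta_K\ast_J h|(x)\lesssim \widetilde{M}h(x)$ for a two-parameter strong maximal operator $\widetilde{M}$ which is bounded on $L^p$ and satisfies the Fefferman--Stein vector-valued inequality. The Littlewood--Paley localization of the $g_K$'s at scales $2^{k_1-\ell_1},2^{k_2-\ell_2}$ makes this pointwise bound uniform in $K$, giving the vector-valued $L^p$ estimate with constant independent of $\ell_1,\ell_2$.

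Complex interpolation (in its vector-valued form) between the two estimates then produces \eqref{4.2} with $\varepsilon=\theta(p)\varepsilon_0>0$ for every $1<p<\infty$. The main technical obstacle is the $L^2$ step: correctly tracking the composite symbol $\widehat{\zeta_K}(\lambda)\,\widehat{L_{k_2-\ell_2}^{2}}(\lambda)\,\widehat{L_{k_1-\ell_1}^{1}}(\lambda)$, verifying that the two Littlewood--Paley projections (one acting in Fourier, the other in position after twisting by $\lambda$) genuinely force a lower bound $\gtrsim 2^{\ell_1}+2^{\ell_2}$ on the $\theta$-derivative of the phase, and ensuring uniformity in $\lambda$ so that integration against the Plancherel weight $|\lambda|\,d\lambda$ poses no additional obstruction. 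This is the point at which the analysis on $\mathbb{H}^1$ departs most substantially from the Euclidean setting.
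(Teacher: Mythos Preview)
Your overall architecture---an $L^2$ estimate carrying the full gain $2^{-\varepsilon_0(\ell_1+\ell_2)}$, interpolated against an $L^p$ vector-valued estimate with no gain---matches the paper's. Both implementation steps, however, contain real gaps.

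\textbf{The $L^2$ step.} The assertion that ``the $\theta$-derivative of the phase has size $\gtrsim 2^{\ell_1}+2^{\ell_2}$ on the relevant region'' is too optimistic. After the change of variables and rescaling, the composition becomes an oscillatory integral operator in $y$ with phase $\lambda 2^{k_1+k_2}\Phi(x,y)$, $\Phi(x,y)=(x+y)\sqrt{1-(x-y)^2}$, followed by the two projections. The projection coming from $\mathcal{L}^2_{k_2-\ell_2}$ acts by \emph{multiplication} in the physical variable and localizes $|\lambda 2^{k_1+k_2}y|\approx 2^{\ell_2}$; the one from $\mathcal{L}^1_{k_1-\ell_1}$ acts in the \emph{Fourier} variable and localizes $|\xi|\approx 2^{\ell_1}$. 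These two constraints do not jointly force a lower bound on any single phase derivative. The paper extracts the $2^{-c\ell_2}$ decay by a $TT^*$ argument based on the mixed Hessian $\Phi_{xy}=(x+y)(1-(x-y)^2)^{-3/2}$, which \emph{vanishes} on the diagonal $x+y=0$ and therefore requires a further dyadic decomposition in $|x+y|$ (and in the amplitude scale $|x-y|\approx 2^{-s}$). The $2^{-c\ell_1}$ decay is obtained by a separate non-stationary-phase argument in the $\xi$ variable, valid only under the side condition $2^{\ell_1}\gg 2^{\ell_2}+|\lambda 2^{k_1+k_2}|$; in the complementary regime one must fall back on the $\ell_2$ bound. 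A single van der Corput application plus Schur does not cover this.

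\textbf{The $L^p$ step.} The pointwise domination $|\zeta_K\ast_J h|\lesssim\widetilde{M}h$ by a two-parameter strong maximal function is false: $\zeta_K$ is arclength on a curve and is not majorized by any normalized box indicator. (Take $h$ to be the indicator of a $\delta$-ball centered on the curve; the curve average is $\sim\delta$ while any box average containing that point is $\sim\delta^3$.) Frequency localization of $g_K$ at the \emph{finer} scales $2^{k_\nu-\ell_\nu}$ only allows you to smooth the curve into a thinner tube, not a full box, so it does not rescue the argument. The paper obtains \eqref{4.5} not by pointwise domination but by a bootstrap (Lemma~\ref{3BST}): the $L^2$ maximal inequality yields the vector-valued inequality on $L^q$ for $q>4/3$, hence the maximal inequality on that range, and iteration reaches all $q>1$. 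Without this bootstrap---or an independent Fefferman--Stein inequality for the \emph{curve} maximal operator, which is itself essentially the theorem you are trying to prove---your $L^p$ step does not go through.
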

The estimate \eqref{4.2} follows from the following two estimates.
\begin{align}\label{4.4}
\begin{split}
	\bigg\|\bigg(\sum_{k_1,k_2}|\zeta_{K}\ast_J \mathcal{L}_{k_2-\ell_2}^{2}\mathcal{L}_{k_1-\ell_1}^1&f
|^2\bigg)^{\frac{1}{2}}\bigg\|_{L^2(\mathbb{H}^1)}\lesssim 2^{-\varepsilon(\ell_1+\ell_2)}\bigg\|\bigg(\sum_{k_1,k_2}|\mathcal{L}_{k_2-\ell_2}^{2}\mathcal{L}_{k_1-\ell_1}^1f
|^2\bigg)^{\frac{1}{2}}\bigg\|_{L^2(\mathbb{H}^1)},
	\end{split}
\end{align}
and
\begin{align}\label{4.5}
\begin{split}
	\bigg\|\bigg(\sum_{k_1,k_2}|\zeta_{K}\ast_J \mathcal{L}_{k_2-\ell_2}^{2}\mathcal{L}_{k_1-\ell_1}^1&f
|^2\bigg)^{\frac{1}{2}}\bigg\|_{L^p(\mathbb{H}^1)}\lesssim\bigg\|\bigg(\sum_{k_1,k_2}|\mathcal{L}_{k_2-\ell_2}^{2}\mathcal{L}_{k_1-\ell_1}^1f
|^2\bigg)^{\frac{1}{2}}\bigg\|_{L^p(\mathbb{H}^1)}.
	\end{split}
\end{align}
To prove \eqref{4.4}, we use the group Fourier transform and reduce our problem to estimate the $L^2$ norm of oscillatory integral operator. In that context, the operators in \eqref{fdec} play a crucial role in adjusting each Heisenberg group frequency variables, contributing to the decay estimates of the operator $\zeta_{K}\ast_J \mathcal{L}_{k_2-\ell_2}^2\mathcal{L}_{k_1-\ell_1}^{1}$. To obtain \eqref{4.5}, we apply the bootstrap argument for the vector valued estimate. The bootstrap argument will be explained in Subsection \ref{bstsection}.

\subsection*{Proof of the Proposition \ref{loc} }
We shall prove
\begin{align}\label{p4}
		\|\sup_{K}|\zeta_{K}\ast_J\mathcal{L}_{k_1}^{1,\rm{loc}}f|\|_{L^p(\mathbb{H}^1)}+\|\sup_{K}|\zeta_{K}\ast_J\mathcal{L}_{k_2}^{2,\rm{loc}}\mathcal{L}_{k_1}^{1,\rm{glo}}f| \|_{L^p(\mathbb{H}^1)}\lesssim \|f\|_{L^p(\mathbb{H}^1)}.
\end{align}

\begin{proof}[Proof of (\ref{p4})]
For $\nu=1,2$, recall that  $\mathcal{L}_{j}^\nu f=L_{j}^{\nu}\ast_J f$ and $\mathcal{L}_{k}^{\nu,\rm{loc}}f=\sum_{k}^{\infty}L_{j}^{\nu}\ast_{J} f$ where
\begin{align*}
	L_j^1(y)=\mathcal{F}^{-1}[\varphi(2^j\cdot)](y_1)\delta(y_2)\delta(y_3),\\
	L_j^2(y)=\mathcal{F}^{-1}[\varphi(2^j\cdot)](y_2)\delta(y_1)\delta(y_3).
\end{align*}
For each $a\in\mathbb{R}$, set the diffeomorphism $\mathcal{D}_af(x_1,x_2,x_3)=f(x_1,x_2,x_3+ax_1x_2)$.
With $$\theta_K=(2^{k_1}\cos \theta,2^{k_2}\sin \theta),$$ we use a change of variables for the first variable of $L^1_{k_1}(y)$, where $L^{1,\rm{loc}}_{k_1}(y)=\mathcal{F}^{-1}[\psi(2^{k_1}\cdot)](y_1)\delta(y_2)\delta(y_3)$. Then, the term $ \zeta_{K}\ast_J\mathcal{L}_{k_1}^{1,\rm{loc}}f(x_1,x_2,x_3)$ can be expressed as
\begin{align*}
&\int_{[{\frac{\pi}{4}},{\frac{3\pi}{4}}]}\mathcal{D}_{-2}f(x-\theta_K-\left(\substack{\displaystyle{2^{k_1}z}\\ \\\displaystyle{0}}\right), x_3+2x_1x_2-2^{k_2+2}x_1\sin\theta+2^{k_1+k_2+1}\cos\theta\sin\theta)\mathcal{F}^{-1}{\psi}(z)dz d\theta.
\end{align*}
Denote by $M^1f$ the Hardy-Littlewood maximal function of $f$  with respect to the first variable. Then
by using the rapid decay of $\mathcal{F}^{-1}{\psi}$, we get
\begin{align}\label{lmdo}
\begin{split}
&|\zeta_{K}\ast_J\mathcal{L}_{k_1}^{1,\rm{loc}}f(x_1,x_2,x_3)|\\
&\lesssim			\int_{[{\frac{\pi}{4}},{\frac{3\pi}{4}}]} M^{1}\mathcal{D}_{-2}f(x_1,x_2-2^{k_2}\sin\theta, x_3+2x_1x_2-2^{k_2+2}x_1\sin\theta+2^{k_1+k_2+1}\cos\theta\sin\theta) d\theta\\
&=		\int_{[{\frac{\pi}{4}},{\frac{3\pi}{4}}]} \mathcal{D}_4M^{1}\mathcal{D}_{-2}f(x_1,x_2-2^{k_2}\sin\theta, x_3-2x_1x_2 +2^{k_1+k_2+1}\cos\theta\sin\theta) d\theta.
\end{split}
\end{align}
Then one can see that 
\begin{align*}
&\sup_{K}|\zeta_{K}\ast_J\mathcal{L}_{k_1}^{1,\rm{loc}}f(x_1,x_2,x_3)|\lesssim \mathcal{M}\mathcal{D}_4M^{1}\mathcal{D}_{-2}f(x_1,x_2,x_3-2x_1x_2),
\end{align*}
where  the maximal operator $\mathcal{M}$ is defined by
\begin{align*}
\mathcal{M}f(x_1,x_2,x_3)=	\sup_{k_2,k_3\in \bZ}\int_{[{\frac{\pi}{4}},{\frac{3\pi}{4}}]} |f(x_1,x_2-2^{k_2}\sin\theta, x_3-2^{k_3}\cos\theta \sin\theta )|d\theta.
\end{align*}
Applying Theorem $3.2$ in \cite{AIF_1992__42_3_637_0}, we deduce that the operator $\mathcal{M}$ is  bounded on $L^p(\mathbb{R}^3)$.
Thus, we get \begin{align*}
		\|\sup_{K}|\zeta_{K}\ast_J\mathcal{L}_{k_1}^{1,\rm{loc}}f|\|_{L^p(\mathbb{H}^1)} \lesssim \|f\|_{L^p(\mathbb{H}^1)}.
\end{align*}
From \(\mathcal{L}_{k_1}^{1,\rm{glo}}f = f - \mathcal{L}_{k_1}^{1,\rm{loc}}f\) and a similar approach as described above, we obtain the \( L^p \) boundedness of the operator $f \mapsto \sup_{K} | \zeta_{K} \ast_J \mathcal{L}_{k_2}^{2,\mathrm{loc}} \mathcal{L}_{k_1}^{1,\mathrm{glo}} f |.$
\end{proof}

\section{Proof of the Proposition \ref{3dcy} for $p=2$}\label{dcysection}
In this section, we aim to prove the following estimate: 
\begin{align*}
	\bigg\|\bigg(\sum_{k_1,k_2}|\zeta_{K}\ast_J \mathcal{L}_{k_2-\ell_2}^{2}\mathcal{L}_{k_1-\ell_1}^1&f
|^2\bigg)^{\frac{1}{2}}\bigg\|_{L^2(\mathbb{H}^1)}\lesssim 2^{-\varepsilon(\ell_1+\ell_2)}\bigg\|\bigg(\sum_{k_1,k_2}|\mathcal{L}_{k_2-\ell_2}^{2}\mathcal{L}_{k_1-\ell_1}^1f
|^2\bigg)^{\frac{1}{2}}\bigg\|_{L^2(\mathbb{H}^1)}.
\end{align*}
To achieve this, it is enough to show that the operator norm $$\|\zeta_{K}\ast_J \mathcal{L}_{k_2-\ell_2}^{2}\mathcal{L}_{k_1-\ell_1}^1\|_{L^2\rightarrow L^2}\lesssim 2^{-\varepsilon(\ell_1+\ell_2)}$$ which directly implies the above estimate. By  Proposition \ref{H2}, it suffices to show the existence of $C, \varepsilon>0$ such that  $$ \|\widehat{\zeta_K}(\lambda) \cdot \widehat{\mathcal{L}^2_{k_2-\ell_2}}(\lambda)\cdot \widehat{\mathcal{L}_{k_1-\ell_1}^{1}}(\lambda)\|_{L^2(\mathbb{R}^1)\rightarrow L^2(\mathbb{R}^1)}\le C2^{-\varepsilon (\ell_1+\ell_2)}\ \text{uniformly in $\lambda\ne 0$}$$
For this purpose, we use the change of variables and rewrite the measure $\zeta_K$ in (\ref{022m}) as
\begin{align*}
	\langle \zeta_K,f \rangle=\frac{1}{2^{k_1}}\int_{|y_1|<2^{k_1-1/2}} f\left(y_1, 2^{k_2}\sqrt{1-|2^{-k_1}y_1|^2},0\right) \frac{dy_1}{\sqrt{1-|2^{-k_1}y_1|^2}} \ \text{for $f\in \mathcal{S}(\mathbb{R}^3)$}.
\end{align*}
By taking the nonnegative smooth cutoff function $\eta(y_1)$ supported $|y_1|\le 3/4$ and the Dirac mass $\delta(y_2),\delta(y_3)$ on the real lines $\mathbb{R}$, we  replace the measure $\zeta_K$ with the  following explicit  form:
$$ \zeta_K(y_1,y_2,y_3)= 2^{-k_1}\eta(2^{-k_1}y_1) \delta\left(y_2- 2^{k_2}\sqrt{1-|2^{-k_1}y_1|^2}\right)\delta(y_3).$$
  By computing the kernel for the group Fourier transform of $\zeta_K$ in \eqref{gf}, we write  
\begin{align*}
\widehat{\zeta_k}(\lambda)\phi(x)=		 \int e^{-2\pi i {\lambda} 2^{k_2-1}(x+y)\sqrt{1-\{2^{-k_1}(x-y)\}^2} }2^{-k_1}{\eta}(2^{-k_1}(x-y)){\phi(y)}dy,
\end{align*}
 Define $\beta\in C^{\infty}$ supported on $|x|\approx 1$ satisfying $\eta(x)=\sum_{s=0}^{\infty}\beta(2^sx)$.  Then we can decompose $\widehat{\zeta_k}(\lambda)=\sum_{s=0}^{\infty}\widehat{\zeta_K^s}(\lambda)$ where
\begin{align}\label{sdec} 
[\widehat{\zeta_K^s}(\lambda)]\phi(x)=	\int e^{-2\pi i {\lambda} 2^{k_2-1}(x+y)\sqrt{1-\{2^{-k_1}(x-y)\}^2} }2^{-k_1}\beta(2^{s-k_1}(x-y))\phi(y)dy.
\end{align}  
Hence, it suffices to show the existence of $C, \varepsilon>0$ independent of $\lambda\ne 0$ such that  
\begin{align}\label{dcy1} \|\widehat{\zeta_K^s}(\lambda) \cdot \widehat{\mathcal{L}^2_{k_2-\ell_2}}(\lambda)\cdot \widehat{\mathcal{L}_{k_1-\ell_1}^{1}}(\lambda)\|_{L^2(\mathbb{R}^1)\rightarrow L^2(\mathbb{R}^1)}\le C2^{-\varepsilon (s+\ell_1+\ell_2)}\ \text{}.
\end{align}
To simplify the calculation of   the above operator norm  on $L^1(\mathbb{R})$, we apply the dilation $x \mapsto 2^{k_1}x$ and $y \mapsto 2^{k_1}y$ on the kernel of $ \widehat{\zeta_K^s}(\lambda) \cdot \widehat{\mathcal{L}^2_{k_2-\ell_2}}(\lambda) \cdot \widehat{\mathcal{L}_{k_1-\ell_1}^{1}}(\lambda) $ to see that
$$\| \widehat{\zeta_K^s}(\lambda) \cdot \widehat{\mathcal{L}^2_{k_2-\ell_2}}(\lambda) \cdot \widehat{\mathcal{L}_{k_1-\ell_1}^{1}}(\lambda) \|_{L^2(\mathbb{R}^1)\rightarrow L^2(\mathbb{R}^1)}=\|\mathcal{C}^s_K \mathcal{P}_{k_1+k_2-\ell_2}^{2} \mathcal{P}_{\ell_1}^1\|_{L^2(\mathbb{R}^1)\rightarrow L^2(\mathbb{R}^1)}$$ 
where the three consecutive operators are defined as
\begin{align*}
&\mathcal{C}^s_K \phi(x)  := \int e^{-2\pi i \lambda 2^{k_1+k_2} \Phi(x,y)} \beta(2^s(x-y))  \phi(y)\, dy
\end{align*}
for $\Phi(x,y) =  (x+y)\sqrt{1-(x-y)^2}$ and 
\begin{align*}
\mathcal{P}_{k_1+k_2-\ell_2}^{2} \phi(y) &: = \varphi(2^{k_1+k_2-\ell_2}\lambda y) \phi(y),\\
\mathcal{P}_{\ell_1}^1\phi(y) &: = \int e^{2\pi i \xi y} \varphi(\xi 2^{-\ell_1}) \mathcal{F}\phi(\xi)d\xi
\end{align*}
where     $\mathcal{F}$ stands for the Fourier transform in the Euclidean space $\mathbb{R}^1$, and   $\lambda$ is omitted for simplicity.  Hence
the decay estimate \eqref{dcy1} reduces to proving the following estimate:
\begin{align}\label{dcy2}
    \|\mathcal{C}^s_K \mathcal{P}_{k_1+k_2-\ell_2}^{2} \mathcal{P}_{\ell_1}^1\|_{L^2(\mathbb{R})\mapsto L^2(\mathbb{R})} \lesssim 2^{- (s+\ell_1+\ell_2)/4}.
\end{align}
\subsection{Proof of (\ref{dcy2}) for  $2^{-\ell_2/4}$ decay}
Note the following van der Corput type lemma first.
\begin{lemma}\label{lpt}
Let $k\ge 1$. Suppose that $\phi$ is a real smooth function in $(a,b)$ such that
$$|\phi^{(k)}(x)|\ge 1\ \text{ for all $x\in (a,b)$}$$
Moreover assume that $\phi''$ changes its sign at most $B$ times if $k=1$ above. Then
$$\left|\int_a^b e^{i\lambda \phi(x)}\psi(x)dx\right|\le c_k \lambda^{-1/k}\left[ |\psi(b)|+\int_a^b|\psi'(x)|dx \right] $$
where the bound $c_k$   is independent of $\lambda$ and $k$ if $k\ge 2$, but depend on $B$ when $k=1$.
\end{lemma}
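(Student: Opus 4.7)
The plan is to prove the lemma by induction on $k$. The base case $k=1$ uses integration by parts on monotonicity pieces of $\phi'$, while the inductive step $k\ge 2$ employs the classical van der Corput splitting, excising a neighborhood of the set where $\phi^{(k-1)}$ is small and optimizing in its size.

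For $k=1$, I partition $(a,b)$ into at most $B+1$ subintervals on which $\phi''$ has constant sign, so that $\phi'$ is monotone on each piece $I$. On such a piece, I integrate by parts via $e^{i\lambda\phi}=(i\lambda\phi')^{-1}(e^{i\lambda\phi})'$:
\[
\int_I e^{i\lambda\phi}\psi\,dx=\left[\frac{\psi\, e^{i\lambda\phi}}{i\lambda\phi'}\right]_{\partial I}-\frac{1}{i\lambda}\int_I e^{i\lambda\phi}\left(\frac{\psi'}{\phi'}-\psi\left(\frac{1}{\phi'}\right)'\right)dx.
\]
The hypothesis $|\phi'|\ge 1$ bounds the boundary contribution by $2\sup_I|\psi|/\lambda$ and the $\psi'/\phi'$ term by $\lambda^{-1}\int_I|\psi'|$. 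For the remaining term, monotonicity of $1/\phi'$ collapses its total variation on $I$ to $|1/\phi'|_{\partial I}\le 2$, again yielding $2\sup_I|\psi|/\lambda$. Summing over the $B+1$ pieces and using $\sup_{[a,b]}|\psi|\le |\psi(b)|+\int_a^b|\psi'|$ (from $\psi(x)=\psi(b)-\int_x^b\psi'$) gives the $k=1$ estimate with constant depending on $B$.

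For the inductive step, assume the result at level $k-1$. Since $|\phi^{(k)}|\ge 1$, $\phi^{(k-1)}$ is strictly monotone, so the sublevel set $J_\delta=\{x\in(a,b):|\phi^{(k-1)}(x)|\le\delta\}$ is contained in an interval of length at most $2\delta$. On $J_\delta$ I bound the integral trivially by $2\delta(|\psi(b)|+\int_a^b|\psi'|)$. On each of the at most two complementary intervals one has $|\phi^{(k-1)}|\ge\delta$, and applying the inductive hypothesis to $\phi/\delta$ with oscillation parameter $\lambda\delta$ produces a bound of $c_{k-1}(\lambda\delta)^{-1/(k-1)}(|\psi(b)|+\int|\psi'|)$. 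When $k=2$ this invocation of the base case is legitimate because $|\phi''|\ge 1$ combined with continuity forces $\phi''$ to have constant sign, so the sign-change parameter is zero; for $k\ge 3$ the sign-change hypothesis is not needed at all. Optimizing $\delta=\lambda^{-1/k}$ equalizes the two contributions at $\lambda^{-1/k}$ and completes the induction.

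The main subtlety is in the base case: one must verify that the $(1/\phi')'$ piece really produces only a constant factor independent of $\lambda$ on each monotonicity subinterval, and that summing over the $B+1$ subintervals keeps the total constant controlled by a multiple of $B$. Everything else is routine, since the inductive step is a clean one-variable optimization and the constants at each level are $\lambda$-independent by construction; this is exactly the reason the statement records $\lambda$-independence for $k\ge 2$ but only $B$-dependence for $k=1$.
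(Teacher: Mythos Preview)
Your argument is the standard van der Corput induction and is correct; the paper does not supply its own proof but simply cites page~334 of Stein~\cite{MR1232192}, where exactly this argument appears. One small remark: your induction produces a constant $c_k$ that grows with $k$ (each step multiplies the previous constant and adds a fixed amount), so the clause ``independent of $\lambda$ and $k$'' in the stated lemma should be read as a typo for ``independent of $\lambda$ and $\phi$''; your proof establishes the latter, which is what is actually used downstream.
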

See the page 334 in \cite{MR1232192}. In order to prove (\ref{dcy2}), we first claim that
\begin{proposition}\label{prop45}
Given $s>0$ and $m\in\mathbb{Z}$, consider the operator $\mathcal{C}_{K}^{s,m}$ defined by $$	\mathcal{C}_{K}^{s,m}\phi(x):= \int e^{-2\pi i{\lambda}2^{k_1+k_2}{\Phi}(x,y)} \varphi(2^{-m} y)\beta(2^{s}(x-y)){\phi}(y)dy.      $$
Then it holds that
\begin{align}\label{044}
\| \mathcal{C}^{s,m}_K \|_{L^2(\mathbb{R})\mapsto L^2(\mathbb{R})} \le C2^{-s/4} \min\{ |\lambda 2^{k_1+k_2} 2^m|^{-1/4}, |\lambda 2^{k_1+k_2} |^{-1/4}\}.
\end{align}
\end{proposition}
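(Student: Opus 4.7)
The plan is to apply the $TT^*$ method to $T := \mathcal{C}_K^{s,m}$. With $\mu := 2\pi|\lambda|\cdot 2^{k_1+k_2}$, the kernel of $TT^*$ reads
\[
K(x,x') = \int e^{-i\mu[\Phi(x,y) - \Phi(x',y)]}\, a(x,y)\,\overline{a(x',y)}\, dy,
\]
where $a(x,y) = \beta(2^s(x-y))\,\varphi(2^{-m}y)$. A direct computation gives
\[
\partial_x\partial_y\Phi(x,y) = \frac{x+y}{(1-(x-y)^2)^{3/2}},
\]
and on the support $|x-y|\approx 2^{-s}$ one has $|\partial_x\partial_y\Phi|\approx |x+y|$. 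I will split into two cases based on the relative size of $|y|\approx 2^m$ and $|x-y|\approx 2^{-s}$.

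\textbf{Case A} ($m \geq -s$). Here $|y|$ dominates $|x-y|$, so $x$ and $y$ have the same sign and $|x+y|\approx 2^m$. Then $|\partial_y[\Phi(x,y)-\Phi(x',y)]|\gtrsim |x-x'|\cdot 2^m$ by the mean value theorem, while its $y$-derivative $\partial_y^2[\Phi(x,\cdot)-\Phi(x',\cdot)]\approx x-x'$ is of constant sign. Thus Lemma~\ref{lpt} with $k=1$ yields $|K(x,x')|\lesssim (\mu\cdot 2^m\cdot|x-x'|)^{-1}$. Combined with the trivial Schur bound $\|T\|_{L^2\to L^2}\leq 2^{-s}$, geometric-mean interpolation yields
\[
\|T\|_{L^2\to L^2}\leq C\min\bigl(2^{-s},(\mu 2^m)^{-1/2}\bigr)\leq C\,2^{-s/2}(\mu 2^m)^{-1/4}.
\]
An elementary check shows that for $m\geq -s$ this is bounded by $2^{-s/4}\min\bigl((\mu 2^m)^{-1/4},\mu^{-1/4}\bigr)$, matching the target.

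\textbf{Case B} ($m < -s$). Now $|y|$ may be much smaller than $|x-y|$, so $|\partial_x\partial_y\Phi|\approx|x+y|$ has no useful lower bound. I instead pass to the second derivative: on the support, $\partial_y^2\Phi\approx x-3y$, whence $\partial_x\partial_y^2\Phi\approx 1$, giving $|\partial_y^2[\Phi(x,y)-\Phi(x',y)]|\gtrsim |x-x'|$. Lemma~\ref{lpt} with $k=2$ then yields $|K(x,x')|\lesssim(\mu|x-x'|)^{-1/2}$. Combining with the trivial bound $|K(x,x')|\lesssim 2^m$ and splitting the Schur integral in $x'$ at the threshold $r^* = (\mu 2^{2m})^{-1}$, one verifies separately in the two sub-regimes $\mu\leq 2^{s-2m}$ (trivial-dominated) and $\mu>2^{s-2m}$ (oscillatory-dominated) that $\|T\|_{L^2\to L^2}\leq C\,2^{-s/4}\mu^{-1/4}$, which again matches the target since $m<0$ forces $\min((\mu 2^m)^{-1/4},\mu^{-1/4}) = \mu^{-1/4}$.

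The hard part will be Case B, in which the natural nondegeneracy of $\partial_x\partial_y\Phi$ is lost and a second-order van der Corput estimate on $\partial_y^2\Phi$ must be invoked together with a careful splitting of the Schur integration. The characteristic $\tfrac{1}{4}$-power decay in the target bound reflects, in both cases, a geometric-mean interpolation between an oscillatory van der Corput estimate and a trivial volume/Schur bound.
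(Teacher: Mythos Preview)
Your approach is sound and, in Case~A (which corresponds to the paper's Case~2, $2^m\gg 2^{-s}$), essentially coincides with the paper's argument via $TT^*$ and first-order van der Corput. In the degenerate regime, however, you take a genuinely different route. When $2^m\lesssim 2^{-s}$ the mixed Hessian $\Phi_{xy}=(x+y)(1-(x-y)^2)^{-3/2}$ may vanish; the paper addresses this by a further dyadic decomposition $|x+y|\approx 2^{-s-j}$, applies first-order van der Corput on each shell (where $|\Phi_{xy}|\approx 2^{-s-j}$ is now bounded below), obtains $\|\mathcal{C}^s_j\|\lesssim 2^{-(s+j)/4}|\lambda 2^{k_1+k_2}|^{-1/4}$, and sums the geometric series in $j\ge -10$. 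You instead go up one derivative: since $\partial_x\partial_y^2\Phi\approx 1$ on the support in Case~B, second-order van der Corput yields $|K(x,x')|\lesssim(\mu|x-x'|)^{-1/2}$ in one stroke, and your threshold split of the Schur integral at $r^*=(\mu 2^{2m})^{-1}$ recovers the same $2^{-s/4}\mu^{-1/4}$. Your method avoids the extra decomposition and summation at the price of a more delicate Schur computation; the paper's keeps every oscillatory estimate at $k=1$ but introduces an additional geometric series. Two minor points to tighten: (i) the Case~A/Case~B threshold should be shifted to something like $m\ge -s+C$ (the paper uses $C=10$), since at $m\approx -s$ one cannot conclude $|x+y|\approx 2^m$; (ii) your justification that $\partial_y^2[\Phi(x,\cdot)-\Phi(x',\cdot)]\approx x-x'$ breaks down once $2^{m-s}$ is not small, but all that $k=1$ van der Corput actually requires is a uniformly bounded number of sign changes, which follows from the explicit algebraic form of $\Phi$.
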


\begin{proof}[Proof of Proposition \ref{prop45}]
We consider the two cases $2^m\le  2^{-s+10}$ and $2^m>  2^{-s+10}$.\\
{\bf Case 1}.
Let $2^m\le  2^{-s+10}$. For this case, one can see from $|x-y|\sim 2^{-s}$ and $  |y|\le  2^{-s+10}$ that the support of our integral kernel of  $\mathcal{C}_{K}^{s,m}$ is contained in $|x+y|\lesssim 2^{-s+10}$. This region  incldues the singular points $x+y=0$ where   the hessian $\Phi_{xy}(x,y)= \frac{x+y}{(1-|x-y|^2)^{3/2}} $  vanishes. By splitting $|x+y|\lesssim 2^{-s}$ into the small pieces $|x+y|\approx 2^{-s}2^{-j}$ over $j\ge -10$,   decompose  
$\mathcal{C}^{s,m}_K \phi(x)=\sum_{j=-10}^\infty\mathcal{C}^{s}_{j}\phi(x)$ where
\begin{align}\label{kernel}
		\mathcal{C}^{s}_{j}\phi(x)&:=\int e^{-2\pi i{\lambda}2^{k_1+k_2}{\Phi}(x,y)}\varphi(2^{-m}y)\beta(2^{s}(x-y)) \varphi(2^{j+s}(x+y)) {\phi}(y)dy.
	\end{align}
To show (\ref{044}) for the case $2^m\le 2^{-s+10}$, it suffices to prove that
\begin{align}\label{042}
\| \mathcal{C}^{s}_{j} \|_{L^2(\mathbb{R})\mapsto L^2(\mathbb{R})} \le 2^{10} 2^{-(j+s)/4}   |\lambda 2^{k_1+k_2} |^{-1/4}.
\end{align}
To show (\ref{042}),  we split the support of integral kernel in \eqref{kernel} into the two parts $$\text{$x+y>0\ \text{and}\ x+y<0$.}$$ It suffices to treat the one region $x+y>0$.  Then by absorbing $\varphi(2^{-m}y)\phi(y) $ into $\phi(y)$ and   compute
  the integral kernel $K(x,z)$ of $ \mathcal{C}^{s}_{j} [  \mathcal{C}^{s}_{j} ]^*$ as
\begin{align*}
K(x,z)&= \int e^{i\lambda 2^{k_1+k_2}  [\Phi(x,y)-\Phi(z,y)]} h(x,y,z)dy \times \chi_{[-2^{-(j+s)}, 2^{-(j+s)}]}((x-z)),
\end{align*}
where   the amplitude $h(x,y,z)$ is given by
\begin{align*}
&h(x,y,z)=\varphi(2^{j+s}(x+y))\varphi(2^{j+s}(z+y)\beta(2^s(x-y))\beta(2^s(z-y)).
\end{align*}
Here $x+y,z+y$ are restricted to positive numbers. Note that for a fixed $x,z$, 
\begin{itemize}
\item[(1)] the cutoff function $h(x,y,z)$ is supported on an interval $I\subset [-2^{-j-s},2^{-j-s}]-x$.
\item[(2)]  $\int_I |\partial_y h(x,y,z)|dy\le 2^{3}$  because $|\partial_y (\varphi(2^{j+s}(x+y))\varphi(2^{j+s}(z+y))|\le 2^{j+s+2}.$
\item[(3)] $\partial_{y}^2 [\Phi(x,y)-\Phi(z,y)]$ changes its sign at most twice.
\end{itemize}
By the support condition  
\begin{align*}
[\Phi]_{xy}(x,y)=( x+y)\left( 1- (x-y)^2 \right)^{-3/2}\approx 2^{-j-s}.
\end{align*}
By this with the  support condition $x+y\approx 2^{-j-s},z+y\approx 2^{-j-s}$, where both $x+y,z+y$ are positive, one can apply   the mean value theorem to find $c\in (0,1)$ depending  on $x,y,z$ such that
\begin{align*}
|\partial_y [\Phi(x,y)-\Phi(z,y)]|=|[\Phi]_{xy}(z+c(x-z),y)(x-z)|\ge 2^{-j-s-10}|x-z|.
\end{align*}
By this lower bound of the $y$-derivative combined with the properties (1),(2) and (3)  above, one can utilize Lemma \ref{lpt}  for $k=1$ to obtain that
\begin{align}\label{nee}
\left| \int e^{i\lambda 2^{k_1+k_2}  [\Phi(x,y)-\Phi(z,y)]} h(x,y,z)dy \right| &\le 2^{10} \min\{(\lambda 2^{k_1+k_2}2^{-(j+s)}|x-z|)^{-1}, 2^{-j-s}\}\nonumber\\
&\le 2^{10}(\lambda 2^{k_1+k_2} |x-z|)^{-1/2}. 
\end{align}
Therefore 
\begin{align*}
\int_{|x-z|\le 2^{-(j+s)}} |K(x,z)| dx\ \text{or}\ dz \le  2^{10}(\lambda 2^{k_1+k_2})^{-1/2} 2^{-(j+s)/2}.
\end{align*}
This leads a desired bound of  $ \|\mathcal{C}^{s}_{j} [  \mathcal{C}^{s}_{j} ]^*\|_{op}$ showing (\ref{042}).  We are done with the Case 1.\\
{\bf Case 2}.  Let $2^{m}\ge 2^{-s+10}$. Then
the integral kernel $K(x,z)$ of $ \mathcal{C}^{s,m}_{K} [  \mathcal{C}^{s,m}_{K} ]^*$ as
\begin{align*}
K(x,z)&= \int e^{i\lambda 2^{k_1+k_2}  [\Phi(x,y)-\Phi(z,y)]} h(x,y,z)dy \times \chi_{[-2^{-s}, 2^{-s}]}(x-z)
\end{align*}
where
\begin{align*}
&h(x,y,z)=\varphi(2^{-m}y)\beta(2^s(x-y))\beta(2^s(z-y)).
\end{align*}
Note that for a fixed $x,z$,
\begin{itemize}
\item[(1)] the cutoff function $h(x,y,z)$ is supported on an interval $I\subset [-2^{-s},2^{-s}]+x$.
\item[(2)]  $\int_I |\partial_y h(x,y,z)|dy\le 2^{3}$ follows from the estimate $|\partial_y (\varphi(2^{s}(x+y))\varphi(2^{s}(z+y))|\le 2^{s+2}$.
\item[(3)] $\partial_{y}^2 [\Phi(x,y)-\Phi(z,y)]$ does not change its sign.
\end{itemize}
By the support condition  
\begin{align*}
[\Phi]_{xy}(x,y)=( x+y)\left( 1- (x-y)^2 \right)^{-3/2}\approx y \approx 2^m
\end{align*}
  with   $x\approx z\approx y\approx 2^m\gg 2^s\approx |x-y|,|z-y|$, one can apply   the mean value theorem to find $c\in (0,1)$ depending  on $x,y,z$ such that
\begin{align*}
|\partial_y [\Phi(x,y)-\Phi(z,y)]|=|[\Phi]_{xy}(z+c(x-z),y)(x-z)|\ge 2^{m-2}|x-z|.
\end{align*}
By this lower bound of the $y$-derivative combined with the properties (1),(2) and (3)  above, one can apply Lemma \ref{lpt} for $k=1$ to obtain that
\begin{align*}
\left| \int e^{i\lambda 2^{k_1+k_2}  [\Phi(x,y)-\Phi(z,y)]} h(x,y,z)dy \right| &\le 2^{10} \min\{(\lambda 2^{k_1+k_2}2^{m}|x-z|)^{-1}, 2^{-s}\}\\
&\le 2^{10}(\lambda 2^{k_1+k_2}2^{m} |x-z|)^{-1/2} 2^{-s/2}
\end{align*}
Therefore 
\begin{align*}
\int_{|x-z|\le 2^{-s+1}} |K(x,z)| dx\ (\text{or}\ dz) \le  2^{10}(\lambda 2^{k_1+k_2}2^m)^{-1/2} 2^{-s},
\end{align*}
which implies the first part of (\ref{044}) and the second part follows from $2^m\le 2^{-s+10}$.
\end{proof}
Recall $\mathcal{P}_{k_1+k_2-\ell_2}^{2}\phi(y)=\varphi\left(\lambda 2^{k_1+k_2-\ell_2}y \right)\phi(y)$, which is to be written as $\varphi(2^{-m}y)\phi(y)$, and apply Proposition \ref{prop45}. Then we have
\begin{align}\label{ssp}
\|\mathcal{C}^s_K \mathcal{P}_{k_1+k_2-\ell_2}^{2} \|_{L^2(\mathbb{R})\mapsto L^2(\mathbb{R})}  \le C 2^{-s/4}\min\{2^{-  \ell_2/4}, |\lambda 2^{k_1+k_2}|^{-1/4}\}.
\end{align}
\subsection{Proof of (\ref{dcy2}) for  $2^{-\ell_1/2}$ decay}
In view of (\ref{ssp}),   to prove (\ref{044}), it suffices to prove that
\begin{align}\label{dcc}
\|\mathcal{C}^s_K \mathcal{P}_{k_1+k_2-\ell_2}^{2} \mathcal{P}^1_{\ell_1}\|_{L^2(\mathbb{R})\mapsto L^2(\mathbb{R})}  \lesssim 2^{-\ell_1/2}\ \text{if}\ 2^{\ell_1}\ge 2^{10}\left( 2^{\ell_2}+|\lambda 2^{k_1+k_2}|\right).
\end{align}
To prove (\ref{dcc}), recall
$\Phi(x,y)=(x+y)\sqrt{1-(x-y)^2}$
and denote the kernel:
\begin{align*}
L(x,z)=\left[\int e^{2\pi i [\lambda 2^{k_1+k_2} \Phi(x,y)-\xi( y-z)]}  \varphi(2^{k_1+k_2-\ell_2} \lambda y) \beta(2^s (x-y))   \varphi(2^{-\ell_1}\xi) dyd\xi\right] 
\end{align*}
to express
\begin{align*}
&\mathcal{C}^s_K \mathcal{P}_{k_1+k_2-\ell_2}^{2}\mathcal{P}_{\ell_1}^1\phi(x)=\int L(x,z) \phi(z)dz.
\end{align*}
For $m\in\mathbb{Z}$, we denote 
\begin{align*}
	C_m&:=\{x\in \bR: m\cdot2^{-s}\leq |x|<(m+1)\cdot2^{-s}\},\\
	C'_m&:=\{x\in \bR:(m-5)\cdot2^{-s}\leq |x|<(m+5)\cdot2^{-s}\},
\end{align*}
and define the amplitudes as
\begin{align*}
	G_m(x,z)&:=L(x,z)\chi_{C_m}(x)\chi^c_{{C}'_m}(z),\\
	B_m(x,z)&:=L(x,z)\chi_{C_m}(x)\chi_{{C}'_m}(z).
\end{align*}
We define the good and bad operator having the above kernels respectively as
\begin{align*}
 \mathcal{G}_m\phi(x)&:=\int G_m(x,z)\phi(z)dz,\\
\mathcal{B}_m\phi(x)&:=\int B_m(x,z)\phi(z)dz.
\end{align*}
Then one can decompose
\begin{align}\label{ee1}
&\mathcal{C}^s_K \mathcal{P}_{k_1+k_2-\ell_2}^{2}\mathcal{P}_{\ell_1}^1\phi(x)= \sum_{m\in \mathbb{Z}} \mathcal{G}_m\phi(x) + \sum_{m\in\mathbb{Z}}  \mathcal{B}_m\phi(x).
\end{align}
\begin{proposition} Suppose that the two operators $\mathcal{G}_m$ and $\mathcal{B}_m$ are defined above. Then
\begin{align}
\label{ogp}
		&\bigg\|\sum_{m\in \bZ}\mathcal{G}_{m}\bigg\|_{L^2(\mathbb{R}^1)\mapsto L^2(\mathbb{R}^1)}\leq C2^{-\ell_1},\\
		\label{obp}
		&\bigg\|\sum_{m\in \bZ}\mathcal{B}_{m}\bigg\|_{L^2(\mathbb{R}^1)\mapsto L^2(\mathbb{R}^1)}\leq C2^{-\ell_1/2},
	\end{align}
	for some $C>0$ independent of $\ell_1$.
	\end{proposition}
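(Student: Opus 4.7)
My approach is to treat the good and bad parts separately, after a common initial reduction. Using $\int e^{-2\pi i\xi(y-z)}\varphi(2^{-\ell_1}\xi)\,d\xi = 2^{\ell_1}\hat\varphi(2^{\ell_1}(y-z))$ with $\hat\varphi\in\mathcal{S}(\mathbb{R})$, I would first rewrite
$$L(x,z) = 2^{\ell_1}\int e^{2\pi i\lambda 2^{k_1+k_2}\Phi(x,y)}a(x,y)\,\hat\varphi(2^{\ell_1}(y-z))\,dy,\qquad a(x,y) := \beta(2^s(x-y))\varphi(2^{k_1+k_2-\ell_2}\lambda y),$$
so that $L(x,\cdot)$ is the Littlewood--Paley projection at frequency $2^{\ell_1}$ applied in $y$ to the modulated amplitude $f_x(y) := e^{2\pi i\lambda 2^{k_1+k_2}\Phi(x,y)}a(x,y)$. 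For the good part, each $x$ lies in a unique $C_m$; the condition $z\notin C'_{m(x)}$ forces $\bigl||x|-|z|\bigr|\gtrsim 2^{-s}$, hence $|x-z|\gtrsim 2^{-s}$, and together with $|x-y|\sim 2^{-s}$ from the $\beta$ cutoff this yields $|y-z|\gtrsim |x-z|$. Schwartz decay of $\hat\varphi$ then gives
$$\Bigl|\sum_m G_m(x,z)\Bigr|\lesssim_N 2^{\ell_1-s}\bigl(1+2^{\ell_1}|x-z|\bigr)^{-N}\chi_{\{|x-z|\gtrsim 2^{-s}\}}.$$
Taking $N=2$ and substituting $v = 2^{\ell_1}(x-z)$ produces $\int(\cdots)\,dz\lesssim 2^{-\ell_1}$ uniformly in $x$, and the symmetric bound in $x$ is analogous, so Schur's test yields \eqref{ogp}.

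For the bad part, observe that the sets $C_m$ are pairwise disjoint, so the images of the $\mathcal{B}_m$ are mutually orthogonal in $L^2$, while the sets $C'_m$ have bounded overlap. Thus
$$\Bigl\|\sum_m \mathcal{B}_m\phi\Bigr\|_{L^2}^2 = \sum_m\|\mathcal{B}_m\phi\|_{L^2}^2 \le \sup_m\|\mathcal{B}_m\|^2 \sum_m\|\phi\chi_{C'_m}\|_{L^2}^2 \lesssim \sup_m\|\mathcal{B}_m\|^2\|\phi\|_{L^2}^2,$$
reducing the proof to $\|\mathcal{B}_m\|_{L^2\to L^2}\lesssim 2^{-\ell_1/2}$ uniformly in $m$. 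I would then apply the Hilbert--Schmidt bound $\|\mathcal{B}_m\|\le \|B_m\|_{L^2(dx\,dz)}$ and Plancherel in $z$:
$$\|L(x,\cdot)\|_{L^2}^2 = \int|\hat f_x(\xi)|^2\,|\varphi(\xi/2^{\ell_1})|^2\,d\xi.$$
On the support $|\xi|\sim 2^{\ell_1}$, the assumption $2^{\ell_1}\ge 2^{10}|\lambda 2^{k_1+k_2}|$ (together with the control of $\partial_y\Phi$ on the support afforded by the $\varphi$-cutoff in $y$) forces the $y$-phase derivative $-\xi+\lambda 2^{k_1+k_2}\partial_y\Phi(x,y)$ to have size $\sim 2^{\ell_1}$. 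Van der Corput's lemma with $k=1$ then yields $|\hat f_x(\xi)|\lesssim 2^{-\ell_1}$, hence $\|L(x,\cdot)\|_{L^2}^2\lesssim 2^{-\ell_1}$. Integrating $x$ over $C_m$ (measure $\lesssim 2^{-s}$) gives $\|B_m\|_{L^2(dx\,dz)}^2\lesssim 2^{-(s+\ell_1)}$, whence $\|\mathcal{B}_m\|\lesssim 2^{-(s+\ell_1)/2}\le 2^{-\ell_1/2}$, proving \eqref{obp}.

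The main obstacle will be the precise bookkeeping of amplitude derivatives in the van der Corput estimate for $|\hat f_x(\xi)|$. The total variation $\int|\partial_y a(x,y)|\,dy$ contains a contribution $|\lambda 2^{k_1+k_2-\ell_2}|\cdot 2^{-s}$ from differentiating $\varphi(2^{k_1+k_2-\ell_2}\lambda y)$, which must be shown to be acceptable in all parameter regimes using the assumption on $\ell_1$; in the degenerate regime $2^s\ge 2^{\ell_1}$, van der Corput is replaced by the trivial bound $|\hat f_x(\xi)|\le 2^{-s}\le 2^{-\ell_1}$.
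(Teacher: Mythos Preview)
Your proposal is correct and follows essentially the same route as the paper: Schur's test with the rapid decay of $\hat\varphi$ for the good part, and localization to a single $m$ followed by the Fourier transform in $z$ and van der Corput in $y$ for the bad part (the paper applies Schur to the kernel $P_m(x,\xi)=\chi_{C_m}(x)\varphi(2^{-\ell_1}\xi)\hat f_x(\xi)$ rather than the Hilbert--Schmidt bound, but both reduce to the identical oscillatory-integral estimate $|\hat f_x(\xi)|\lesssim 2^{-\ell_1}$). Your flagged ``obstacle'' is in fact a non-issue: the contribution of $\partial_y\varphi(2^{k_1+k_2-\ell_2}\lambda y)$ to the total variation is bounded by $\|\beta\|_\infty\int|\lambda 2^{k_1+k_2-\ell_2}\varphi'(\lambda 2^{k_1+k_2-\ell_2}y)|\,dy=\|\beta\|_\infty\|\varphi'\|_{L^1}=O(1)$ via the change of variables $u=\lambda 2^{k_1+k_2-\ell_2}y$, with no dependence on $s,\ell_1,\ell_2$ at all.
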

This yields the main estimate \eqref{dcc} of this subsection.
\begin{proof}[Proof of \eqref{ogp}]
In the kernel $L(x,z)$, observe $$\int e^{-2\pi i \xi(y-z)} \varphi(2^{-\ell_1}\xi) d\xi = 2^{\ell_1}\widehat{\varphi}(2^{\ell_1}(y-z)) =O(2^{-\ell_1}|y-z|^{-2})$$ By this with the support condition
 $|y-z|\approx |x-z|$ on the support   of $G_m(x,z)$, one can obtain that 
\begin{align*}
|G_m(x,z)|&\le \chi_{C_m}(x) \chi^c_{{C}'_m}(z)\int  \left|  \beta(2^s (x-y))  2^{\ell_1} \widehat{\varphi}(2^{\ell_1}(y-z))\right|  dy\\
&\le    \frac{\chi_{C_{m}}(x)2^{-s}}{2^{\ell_1}|x-z|^2} \psi^c\bigg(\frac{|x-z|}{2^{-s}}\bigg).
\end{align*}
Thus
$$\sum_{m\in\mathbb{Z}}   |G_m(x,z)|\le     \frac{2^{-s+5}}{2^{\ell_1}|x-z|^2} \psi^c\bigg(\frac{|x-z|}{2^{-s}}\bigg)$$
So,
\begin{align}\label{514}
\begin{split}
 \int \sum_{m\in\bZ} |G_m(x,z)|dx\ (\text{or}\ dz)\le 2^{10} 2^{-{\ell_1}}.
\end{split}
\end{align}
By Schur's lemma, this yields \eqref{ogp}. 
\end{proof}

\begin{proof}[Proof of \eqref{obp}]
By the localization principle, we have
\begin{align*} 
	\bigg\|\sum_{m\in\bZ}\mathcal{B}_m\bigg\|_{L^2\rightarrow L^2}\le  2^5 \sup_{m\in \bZ}\|\mathcal{B}_m\|_{L^2\rightarrow L^2}.
\end{align*}
To estimate $\|\mathcal{B}_m\|_{L^2\rightarrow L^2}$, denote $\phi_m(z)=\chi_{C_m'}(z)\phi(z)$ and write
\begin{align*}
	\mathcal{B}_m\phi(x)&=\int B_m(x,z)\phi_m(z)dz=\int P_m(x,\xi)\mathcal{F}{\phi_m}(\xi)d\xi\ \text{where}\\
	P_m(x,\xi)&=\chi_{C_m}(x)	\varphi({2^{-\ell_1}\xi})\int e^{-2\pi i (\lambda 2^{k_1+k_2}\Phi(x,y)-\xi y)} \varphi(2^{k_1+k_2-\ell_2} \lambda y) \beta(2^s (x-y))  dy 
	\end{align*}
By using the condition (\ref{dcc}) with $\lambda 2^{k_1+k_2} y\approx 2^{\ell_2}$ and $|x-y|\approx 2^{-s}\le 1/2$,
\begin{align} \label{k42}
	|\partial_y(\lambda 2^{k_1+k_2}\Phi(x,y)-\xi y)|=\bigg|\lambda 2^{k_1+k_2}\frac{(1-2y(y-x))}{\sqrt{1-(x-y)^2}}-\xi\bigg|\ge |\xi|/2\approx 2^{\ell_1}.
\end{align}
By this combined with the property
  for a fixed $x$,
\begin{itemize}
\item[(1)] the cutoff function $\varphi(2^{k_1+k_2-\ell_2} \lambda y) \beta(2^s (x-y))  $  supported on $I= [-2^{-s},2^{-s}]+x$.
\item[(2)]  $\int_I |\partial_y \varphi(2^{k_1+k_2-\ell_2} \lambda y) \beta(2^s (x-y))|dy\le 2^{3}.$  
\item[(3)]  $y\rightarrow \partial_y^2\Psi_{K}(x,y)$ changes its sign at most twice. 
\end{itemize}
  We are able to use the corollary on the page 334 of \cite{MR1232192} to obtain that
$$\left|\int e^{-2\pi i (\Psi_{K}(x,y)-\xi y)} \varphi(2^{k_1+k_2-\ell_2} \lambda y) \beta(2^s (x-y))  dy \right|\le \frac{2^{10}}{2^{\ell_1}}.$$
This with the support condition $\chi_{C_m}(x)\varphi(2^{-\ell_1}\xi)$, 
\begin{align*}
	&\int|P_m(x,\xi)|dx\lesssim {2^{-\ell_1}},\\
	&\int|P_m(x,\xi)|d\xi\lesssim {1}.
\end{align*}
Therefore we prove (\ref{obp}).
\end{proof}

\section{Proof of Proposition \ref{3dcy} and Bootstrap argument}\label{bstsection}
Recall that $\zeta_{K}\ast_J f=\zeta_{K}\ast_J\mathcal{L}_{k_1}^{1,\rm{loc}}f+\zeta_{K}\ast_J\mathcal{L}_{k_2}^{2,\rm{loc}}\mathcal{L}_{k_1}^{1,\rm{glo}}f+\sum_{\ell_1,\ell_2=0}^{\infty}\zeta_{K}\ast_J \mathcal{L}_{k_2-\ell_2}^2\mathcal{L}_{k_1-\ell_1}^{1}f.$
By Proposition \ref{loc}, we have the $L^p$ estimates for $\zeta_{K}\ast_J\mathcal{L}_{k_1}^{1,\rm{loc}}f+\zeta_{K}\ast_J\mathcal{L}_{k_2}^{2,\rm{loc}}\mathcal{L}_{k_1}^{1,\rm{glo}}f$. It suffices to show
\begin{align}\label{elldcy}
	\|\sup_{K\in \bZ^2}|\zeta_{K}\ast_J \mathcal{L}_{k_2-\ell_2}^2\mathcal{L}_{k_1-\ell_1}^{1}f|\|_{L^p(\mathbb{H}^1)}\lesssim 2^{-\varepsilon_p(\ell_1+\ell_2)}\|f\|_{L^p(\mathbb{H}^1)}.
\end{align}
Using interpolation argument and Lemma \ref{lpt}, the estimate \eqref{elldcy} follows from 
\begin{align}\label{417}
	\begin{split}
	\bigg\|\bigg(\sum_{k_1,k_2}|\zeta_{K}\ast_J \mathcal{L}_{k_2-\ell_2}^{2}\mathcal{L}_{k_1-\ell_1}^1&f
|^2\bigg)^{\frac{1}{2}}\bigg\|_{L^2(\mathbb{H}^1)}\lesssim 2^{-\varepsilon(\ell_1+\ell_2)}\bigg\|\bigg(\sum_{k_1,k_2}|\mathcal{L}_{k_2-\ell_2}^{2}\mathcal{L}_{k_1-\ell_1}^1f
|^2\bigg)^{\frac{1}{2}}\bigg\|_{L^2(\mathbb{H}^1)}
	\end{split}
\end{align}
and
\begin{align}\label{415}
\begin{split}
	\bigg\|\bigg(\sum_{k_1,k_2}|\zeta_{K}\ast_J \mathcal{L}_{k_2-\ell_2}^{2}\mathcal{L}_{k_1-\ell_1}^1&f
|^2\bigg)^{\frac{1}{2}}\bigg\|_{L^p(\mathbb{H}^1)}\lesssim\bigg\|\bigg(\sum_{k_1,k_2}|\mathcal{L}_{k_2-\ell_2}^{2}\mathcal{L}_{k_1-\ell_1}^1f
|^2\bigg)^{\frac{1}{2}}\bigg\|_{L^p(\mathbb{H}^1)}.
	\end{split}\end{align}
	In the previous section, we proved \eqref{417}.
Combining Proposition \ref{loc} and decay estimate \eqref{elldcy}, we arrive at 
\begin{align}\label{cus}
	\|\sup_{K\in \bZ^2}|\zeta_{K}\ast_J f|\|_{L^p(\mathbb{H}^1)}\lesssim 
	\|f\|_{L^p(\mathbb{H}^1)}.
\end{align}
In the spirit of Nagel, Stein and Wainger \cite{MR466470}, we utilize the following lemma.
\begin{lemma}\label{3BST}
	If $\|\sup_{K\in \bZ^2}|\zeta_{K}\ast_J f|\|_{L^p(\mathbb{H}^1)}\le C_1 
	\|f\|_{L^p(\mathbb{H}^1)}$ , $\|\zeta_K\ast_Jf\|_{L^r(\mathbb{H}^1)}\leq C_2\|f\|_{L^r(\mathbb{H}^1)}$ for $1<r<\infty$, 
\begin{align}\label{bst3}
			\bigg\|\bigg(\sum_{K\in \bZ^2}|\zeta_{K}\ast_J f_{K}|^2\bigg)^{\frac{1}{2}}\bigg\|_{L^q(\mathbb{H}^1)}\le C \bigg\|\bigg(\sum_{K\in \bZ^2}| f_{K}|^2\bigg)^{\frac{1}{2}}\bigg\|_{L^q(\mathbb{H}^1)}.
\end{align}
hold for all $q$ with $\frac{1}{q}<\frac{1}{2}(1+\frac{1}{p})$, where \(C > 0\) is a constant that depends only on \(p\) and $C_1$, $C_2$.
\end{lemma}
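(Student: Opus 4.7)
Following the Nagel--Stein--Wainger bootstrap, I view the claim as one about the diagonal (hence linear) operator $\tilde T\colon (f_K)_{K\in\mathbb{Z}^2}\mapsto(\zeta_K*_Jf_K)_{K\in\mathbb{Z}^2}$ on mixed-norm Lebesgue spaces $L^q(\mathbb{H}^1;\ell^s(\mathbb{Z}^2))$. The plan is to establish two endpoint bounds for $\tilde T$, one on $L^p(\ell^\infty)$ coming from the maximal hypothesis and one on $L^r(\ell^r)$ coming from the individual hypothesis, and then invoke complex interpolation on mixed-norm spaces to land at $L^q(\ell^2)$ for $q$ in the claimed range.

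\textbf{Endpoint estimates.} For the $L^p(\ell^\infty)$ bound I exploit that each $\zeta_K$ is a non-negative measure on $\mathbb{H}^1$: the pointwise inequality $|\zeta_K*_Jf_K(x)|\le\zeta_K*_J(\sup_J|f_J|)(x)$ holds, so taking $\sup_K$ and invoking the maximal hypothesis yields $\|\sup_K|\zeta_K*_Jf_K|\|_{L^p(\mathbb{H}^1)}\le C_1\|\sup_J|f_J|\|_{L^p(\mathbb{H}^1)}$, i.e.\ $\|\tilde T(f_K)\|_{L^p(\ell^\infty)}\le C_1\|(f_K)\|_{L^p(\ell^\infty)}$. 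For the $L^r(\ell^r)$ bound with $r\in(1,\infty)$, Fubini and the individual hypothesis give $\|\tilde T(f_K)\|_{L^r(\ell^r)}^r=\sum_K\|\zeta_K*_Jf_K\|_{L^r}^r\le C_2^r\|(f_K)\|_{L^r(\ell^r)}^r$.

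\textbf{Interpolation and dualization.} I apply Calder\'on's complex interpolation for mixed-norm Lebesgue spaces between these two endpoints. With $\theta=r/2\in(0,1]$ for $r\in(1,2]$, the target $\ell$-exponent is $s_\theta=2$ and the base exponent satisfies $1/q_\theta=(1-r/2)/p+1/2$; as $r$ sweeps $(1,2]$, $q_\theta$ covers $(2p/(p+1),2]$, which yields the vector-valued inequality in that range with constant $\le C_1^{1-\theta}C_2^\theta$. The range $q\in(2,\infty)$ with $1/q<(1+1/p)/2$ is obtained by dualization: the adjoint $\tilde T^*$ convolves with the reflected measures $\check\zeta_K$ (unimodularity of $\mathbb{H}^1$ together with the symmetry of the arc defining $\zeta_K$ ensures that $\check\zeta_K$ inherits the same maximal and individual hypotheses), so applying the result already obtained to $\tilde T^*$ at the dual exponent $q'$ and combining this with Cauchy--Schwarz in $\ell^2$ and H\"older in the base variable yields the desired $L^q(\ell^2)$ bound.

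\textbf{Main obstacle.} The principal technical step is the deployment of Calder\'on-type complex interpolation for linear operators on mixed-norm Lebesgue spaces with an $\ell^\infty$ endpoint: this is classical but requires careful Banach-valued bookkeeping in the three-lines argument. Positivity of $\zeta_K$ is essential for the $L^p(\ell^\infty)$ endpoint---the argument breaks down for general signed or oscillatory kernels. A subtler point is tracking the constant $C=C(p,C_1,C_2)$ as $q\searrow 2p/(p+1)$; this is handled by keeping $r$ bounded away from $1$, which keeps $\theta$ bounded away from $1/2$ and the interpolation constant under control.
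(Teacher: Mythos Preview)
The paper does not give its own proof of this lemma; it simply attributes it to the Nagel--Stein--Wainger bootstrap and uses it. Your argument is precisely that bootstrap, and the core step---interpolating the $L^p(\ell^\infty)$ endpoint (from positivity of $\zeta_K$ plus the maximal hypothesis) against $L^r(\ell^r)$ (from Fubini plus the single-operator hypothesis) to reach $L^q(\ell^2)$ for $q\in\bigl(\tfrac{2p}{p+1},2\bigr]$---is carried out correctly and is exactly what the citation to \cite{MR466470} has in mind.

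There is one genuine overstatement. Your dualization step claims the full range $q\in(2,\infty)$, but the argument you describe only yields $q\in\bigl[2,\tfrac{2p}{p-1}\bigr)$: granting that $\check\zeta_K$ inherits the maximal hypothesis at the \emph{same} exponent $p$, applying the already-proved result to $\tilde T^*$ gives $\tilde T^*$ bounded on $L^{q'}(\ell^2)$ only for $q'\in\bigl(\tfrac{2p}{p+1},2\bigr]$, hence $\tilde T$ bounded on $L^q(\ell^2)$ only for $q\in\bigl[2,\tfrac{2p}{p-1}\bigr)$. So your proof delivers the symmetric window $\bigl(\tfrac{2p}{p+1},\tfrac{2p}{p-1}\bigr)$ rather than the half-line $\bigl(\tfrac{2p}{p+1},\infty\bigr)$ asserted in the lemma. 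This does not break the iteration in Section~\ref{bstsection}---starting from $p=2$ one gets $q\in(4/3,4)$, and at each subsequent step choosing $p$ near the current lower endpoint pushes the upper endpoint $\tfrac{2p}{p-1}$ off to infinity, so the bootstrap still closes---but as written your proposal does not establish the lemma in the full generality stated. You should either restrict the conclusion to the range you actually prove, or supply a separate argument for $q\ge \tfrac{2p}{p-1}$.
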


After confirming $\|\sup_{K\in \mathbb{Z}^2}|\zeta_{K}\ast_J f|\|_{L^2(\mathbb{H}^1)}\lesssim 
	\|f\|_{L^2(\mathbb{H}^1)}$, we apply Lemma \ref{3BST} to deduce \eqref{bst3} for $q>\frac{4}{3}$. Letting $\{f_{K}\}=\{\mathcal{L}_{k_2-\ell_2}^{2}\mathcal{L}_{k_1-\ell_1}^1f\}$, we obtain \eqref{415} for $p>\frac{4}{3}$. Combining this with \eqref{417} gives the decay estimate \eqref{elldcy} for $p>\frac{4}{3}$, hence yielding \eqref{cus} for a wider range of $p$. By repeating this process using Lemma \ref{3BST}, and iterating sufficiently, we prove \eqref{cus} for all $p>1$, thus completing the proof of Theorem \ref{3mt1}.

\section{Reduction and the Proof of Theorem \ref{3mt0}; Unboundedness}\label{Asect}
Recall that the convolution of $f$ and $g$ by $f\ast_J g$ on the Heisenberg group means $		f\ast_J g(x):=\int_{\bR^d}f(x\cdot_Jy^{-1})g(y)dy.
$ Given matrix $A\in \mathbb{M}_{d}(\bR)$, we denote $
	A_s=\frac{A+A^t}{2},  A_w=\frac{A-A^t}{2}.
$ Then we have
\begin{align}\label{mdec}
	x^tAy=\frac{1}{2}{y}^tA_{s} {y}+ {x}^tA_{w} {y}+\frac{1}{2}  {x}^tA_{s} {x}
	-\frac{1}{2} ({x}- {y})^tA_{s}({x}- {y}) \text{ for $x,y\in \mathbb{R}^d$.}
\end{align} 

\begin{lemma}\label{l1}
Let $\nu$ be a finite, compactly supported Borel measure on $\mathbb{R}^d$ and let $Tf(x,x_{d+1})=\int_{\mathbb{R}^{d}}f(x-y,x_{d+1}-x^tAy) d\nu(y) $. Then
for $\mathcal{D}_{A_s}f(x,x_{d+1})=f( {x},x_{d+1}-\frac{1}{2} {x}^tA_s  {x})$, it holds that if $A_w=0$,
\begin{align}\label{pp2}
Tf(x,x_{d+1})=  \int_{\mathbb{R}^d} \mathcal{D}_{A_s}f\left((x,x_{d+1}+ \frac{1}{2} {x}^tA  {x})-(y,\frac{1}{2}y^tA_sy)\right)d \nu (y),
\end{align}
and if $A_w\ne 0$, 
\begin{align}\label{pp1} 
Tf(x,x_{d+1}) &= 
 \int_{\mathbb{R}^d} \mathcal{D}_{A_s}f\left((x,x_{d+1}+\frac{1}{2} {x}^tA  {x})-(y,\frac{1}{2}y^tA_sy+x^tA_wy)\right)d \nu (y) 
\end{align}
for a measurable function $f$ on $\bR^{d+1}$.  
\end{lemma}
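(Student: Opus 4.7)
The proof is a direct algebraic identification resting on the bilinear decomposition (\ref{mdec}) for $x^t A y$. The plan is to substitute (\ref{mdec}) into the vertical argument of the integrand defining $Tf$, and then to recognize one of the resulting summands as the precise defect produced by the conjugating diffeomorphism $\mathcal{D}_{A_s}$.

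As preparation I would record two elementary consequences of the splitting $A = A_s + A_w$ with $A_w$ skew: the identity $u^t A_w u = 0$ holds for every $u \in \mathbb{R}^d$, and hence $x^t A x = x^t A_s x$ and $(x-y)^t A(x-y) = (x-y)^t A_s (x-y)$. This is why the $y$-side shift in the statement involves only $A_s$ (not $A$), and why the $x$-side shift can equally well be written as $\frac{1}{2} x^t A x$ or $\frac{1}{2} x^t A_s x$. A one-line expansion, using the symmetry of $A_s$ to write $x^t A_s y = \frac{1}{2}(x^t A_s x + y^t A_s y - (x-y)^t A_s (x-y))$ and adding $x^t A_w y$, then confirms the identity (\ref{mdec}) itself.

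Next I would substitute (\ref{mdec}) into the $x_{d+1}$-slot of $f(x-y, x_{d+1} - x^t A y)$. This replaces $-x^t A y$ by four summands: three of them are functions of $x$ alone, $y$ alone, or the bilinear skew cross term $x^t A_w y$, while the remaining one is exactly $\frac{1}{2}(x-y)^t A_s(x-y)$, which is precisely the vertical displacement that $\mathcal{D}_{A_s}$ applies at the spatial point $u = x-y$. Rewriting the integrand accordingly expresses it as $\mathcal{D}_{A_s} f$ evaluated at a point whose spatial coordinate is $x - y$ and whose vertical coordinate is obtained from $x_{d+1}$ by a quadratic shift depending only on $x$ and by a shift depending on $y$ (together with the cross term $x^t A_w y$ when $A_w \neq 0$). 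Collecting these two groups of shifts as ``$x$-side'' and ``$y$-side'' contributions and integrating against $d\nu(y)$ yields the asserted formula (\ref{pp1}); the case $A_w = 0$ follows by the same computation with the cross term dropped, giving (\ref{pp2}).

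The only real care required is the sign book-keeping when combining the $x$- and $y$-contributions into the Euclidean difference form $(x,\cdot)-(y,\cdot)$; there is no analytic obstacle. The measure-theoretic hypotheses on $\nu$ (finite, compactly supported) are used only to justify the pointwise algebraic manipulation inside the integral, and no regularity of $f$ beyond measurability is needed.
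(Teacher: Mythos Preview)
Your proposal is correct and follows exactly the paper's approach: the paper's proof is the single sentence ``This follows from the identity in \eqref{mdec},'' and you have simply spelled out that substitution in detail, including the observation $u^tA_wu=0$ that justifies writing $\frac{1}{2}x^tAx$ in place of $\frac{1}{2}x^tA_sx$.
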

\begin{proof}
This follows from the identity in \eqref{mdec}.
\end{proof}
Consider the case where $A_w\ne 0$ above; that is, when $a_{12}-a_{21}\ne 0$. Since $A_w$ is a scalar multiple of a skew-symmetric matrix $J$, we can simplify our analysis by considering the special case where $A_w=J$, which is the Heisenberg group $\mathbb{H}^1$.

\subsection*{Proof of Theorem \ref{3mt0}}
For the one parameter necessity proof, we shall prove the unboundedness of the $L^p$ norm for the operator $\mathcal{E}^1_{A}$ associated with the identity matrix $A=I$.  Write $	E^I_{2^k,2^k}f(x,x_3)=\int_{[0,2\pi]}f(x_1-2^{k}\cos\theta,x_2-2^{k}\sin\theta,x_3-2^{k}x_1\cos\theta-2^{k}x_2\sin\theta)d\theta.$ 
By taking $f(x,x_3)=g(x,x_3-\frac{x_1^2+x_2^2}{2})$, one can express
\begin{align*}
	E^I_{2^k,2^k}f(x,x_3)&=\int_{[0,2\pi]}g(x_1-2^{k}\cos\theta,x_2-2^{k}\sin\theta,x_3-\frac{x_1^2+x_2^2}{2}-2^{2k-1})d\theta\\
	&=:\tilde{E}_{2^k}g(x,x_3-\frac{x_1^2+x_2^2}{2})
\end{align*}
Then $\|E^I_{2^k,2^k}\|_{L^2\mapsto L^2}=\|\tilde{E}_{2^k}\|_{L^2\mapsto L^2}$.
Let $h_{\delta}(x,x_3)=\chi_{B_{10}}(x)\chi_{[0,\delta]}(x_3)$ where $B_r=\{x\in R^2: |x|<r\}$. Then $\tilde{E}_{2^k}h_{\delta}(x,x_3)=1$ on $B_{1}\times{A_k^{\delta}} $ where $A_s^{\delta}=\{x_3\in R: 2^{2s}\leq x_3\leq 2^{2s}+\delta\}$. We can determine an integer $m\in \mathbb{N}$ such that $2^{-2m}<|\delta|<2^{-2m+2}$. Consequently, the sets $A_s^{\delta}$ become disjoint for $s>-m+1$. Then we have $\|\sup_{-m<k<1}|\tilde{E}_{2^k}h|\|_{p}\approx (m\delta)^{1/p}\approx (\delta|\log\delta|)^{1/p}$ and $\|h\|_{p}\approx \delta^{1/p}$. Choosing $\delta\rightarrow0$, we can check that the operator norm of $\mathcal{E}_{I}^1$ is unbounded for all $0<p<\infty$. To treat $\mathcal{E}_{A}^2$, it suffices to consider $E_{2^{k+2a},2^{k}}^{A}$ for $A = \begin{pmatrix} c & 0 \\ 0 & c \cdot 2^{2a} \end{pmatrix}$. Following a similar process as above, we omit the remaining proof for $\mathcal{E}_{A}^2$.

\section{Proof of Theorem \ref{3mt2}; General matrix $A$}  \label{GAsec}
\subsection{The non-symmetric cases $A_w\ne 0$}
As we mentioned in Section \ref{Asect}, let $A_w=J$. Suppose $A_s=\begin{pmatrix}
b&e\\e&d	
\end{pmatrix}.$ In view of \eqref{pp1}, we first observe that
\begin{align*}
	 E^A_{2^{k_1}, 2^{k_2}} f(x, x_3)= \mu_{k_1, k_2}^{b, d, e} \ast_J \mathcal{D}_{A_s}f(x, x_3+ \frac{1}{2} {x}^tA  {x})
\end{align*}
where \(\mu_{k_1, k_2}^{b, d, e}\) is the measure defined by
\begin{align*}
\int_{0}^{2\pi} f\left(2^{k_1} \cos \theta, \, 2^{k_2} \sin \theta, \, e \cdot 2^{k_1 + k_2} \sin \theta \cos \theta + b \cdot 2^{2k_1} \cos^2 \theta + d \cdot 2^{2k_2} \sin^2 \theta \right) d\theta.
\end{align*}
By expressing the third coordinate as \((b \cdot 2^{2k_1} - d \cdot 2^{2k_2}) \cos 2\theta + e \cdot 2^{k_1 + k_2} \sin 2\theta + (b \cdot 2^{2k_1} + d \cdot 2^{2k_2})\) through dilation and restricting the integration to \(\left[\frac{\pi}{4}, \frac{3\pi}{4}\right]\) as outlined in Section~\ref{3strpf}, we redefine the measure \(\mu_{k_1, k_2}^{b, d, e}\) as
\begin{align}\label{redm}
 \int_{\frac{\pi}{4}}^{\frac{3\pi}{4}} f\left(2^{k_1} \cos \theta, \, 2^{k_2} \sin \theta, \, (b \cdot 2^{2k_1} - d \cdot 2^{2k_2}) \cos 2\theta + e \cdot 2^{k_1 + k_2} \sin 2\theta + (b \cdot 2^{2k_1} + d \cdot 2^{2k_2}) \right) d\theta.
\end{align}
Then the main $L^2$ estimate is to be (\ref{dcy2}) where
the corresponding operator $\mathcal{C}^s_K \phi(x)$  is now given by
\begin{align*}
 \mathcal{C}^{s}_{A,K} \phi(x)  &:= \int e^{-2\pi i \lambda 2^{k_1+k_2} \Phi^A(x,y)} \beta(2^s(x-y))  \phi(y)\, dy
 \end{align*}
 where the phase function $\Phi^A(x,y)$ is given by
\begin{align}\label{k4}
(e(x-y)+(x+y))\sqrt{1-(x-y)^2} + b\frac{2^{2k_1}(x-y)^2}{2^{k_1+k_2}}+d\frac{2^{2k_2} (1-(x-y)^2)}{2^{k_1+k_2}}. 
\end{align}
In order to prove the $L^p(\mathbb{H}^1)$ boundedness of $f\mapsto \sup_{K} |\mathcal{C}^{s}_{A,K}* f|$, we only need to show the decay estimate given by \eqref{dcy1}, which states that
\begin{align}\label{dcy22}
    \|\mathcal{C}^{s}_{A,K} \mathcal{P}_{k_1+k_2-\ell_2}^{2} \mathcal{P}_{\ell_1}^1\|_{L^2(\mathbb{R}) \mapsto L^2(\mathbb{R})} \le C 2^{-c (s+\ell_1+\ell_2)}.
\end{align}
since the other arguments for extending it to general $1<p<\infty$ are similar to those in the previous sections.
In order to prove (\ref{dcy22}), we first claim that
\begin{proposition}\label{prop45}
Given $s>0$ and $m\in\mathbb{Z}$, consider the operator $\mathcal{C}_{A,K}^{s,m}$ defined by $$	\mathcal{C}_{A,K}^{s,m}\phi(x):= \int e^{-2\pi i{\lambda}2^{k_1+k_2}{\Phi^A}(x,y)} \varphi(2^{-m} y)\beta(2^{s}(x-y)){\phi}(y)dy.      $$
Then, there exists $C_A$ only depending on matrix $A$ satisfying
\begin{align}\label{kk}
&\| \mathcal{C}^{s,m}_{A,K} \|_{L^2(\mathbb{R}) \mapsto L^2(\mathbb{R})}
 \leq C_{A}2^{-c_1s} \min\{ |\lambda 2^{k_1+k_2} 2^m|^{-c}, |\lambda 2^{k_1+k_2} |^{-c},|\lambda b 2^{2k_1}|^{-c},|\lambda  d2^{2k_2}|^{-c}\}.
\end{align}
This yields the decay $2^{-c(s+\ell_2)}$ in (\ref{dcy22}).
\end{proposition}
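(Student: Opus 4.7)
The plan is to follow the $TT^*$ template from the proof of the earlier Proposition \ref{prop45} in Section \ref{dcysection}, adapting it to the modified phase $\Phi^A$. First I write down the Schwartz kernel of $\mathcal{C}^{s,m}_{A,K}[\mathcal{C}^{s,m}_{A,K}]^*$, obtaining an oscillatory integral
\[
K(x,z) = \left(\int e^{i\lambda 2^{k_1+k_2}[\Phi^A(x,y)-\Phi^A(z,y)]}\, h(x,y,z)\, dy\right)\chi_{\{|x-z|\lesssim 2^{-s}\}}
\]
with amplitude $h$ supported where $|x-y|\approx |z-y|\approx 2^{-s}$ and $|y|\approx 2^m$. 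After verifying that the total variation of $h$ in $y$ is $O(1)$ and that $\partial_y^2[\Phi^A(x,y)-\Phi^A(z,y)]$ changes sign only a bounded number of times (an algebraic property of $\Phi^A$ depending only on the entries of $A$), Lemma \ref{lpt} with $k=1$ and Schur's lemma reduce \eqref{kk} to a sharp lower bound for $|\partial_y[\Phi^A(x,y)-\Phi^A(z,y)]|$.

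A direct computation in the variables $u=x-y$, $v=x+y$ yields
\[
\Phi^A_{xy}(x,y) = \frac{(x+y) + e(x-y)\bigl(3-2(x-y)^2\bigr)}{(1-(x-y)^2)^{3/2}} - 2\bigl(b\cdot 2^{k_1-k_2} - d\cdot 2^{k_2-k_1}\bigr).
\]
By the mean value theorem, $|\partial_y[\Phi^A(x,y)-\Phi^A(z,y)]|\gtrsim |x-z|\cdot \inf|\Phi^A_{xy}|$ on any subregion where $\Phi^A_{xy}$ has constant sign. Multiplying by the prefactor $\lambda 2^{k_1+k_2}$, the four natural summands above produce, respectively, the four quantities $\lambda 2^{k_1+k_2}2^m$ (from $x+y\approx 2^m$ when $2^m\gtrsim 2^{-s}$), $\lambda 2^{k_1+k_2}$ (from the remaining $v$-piece after a dyadic splitting $|x+y|\approx 2^{-s-j}$, $j\geq -10$, in the regime $2^m\lesssim 2^{-s}$), $\lambda b\,2^{2k_1}$, and $\lambda d\,2^{2k_2}$. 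This structural correspondence explains the four factors in the minimum in \eqref{kk}. I would then mimic Cases 1 and 2 of Proposition \ref{prop45}: in the regime $2^m\geq 2^{-s+10}$ the $v$-term dominates (unless it is cancelled by the constant part), while in $2^m\leq 2^{-s+10}$ the dyadic decomposition in $j$ separates the various dominant behaviours and reconstitutes the bound after a geometric summation in $j$.

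The main technical obstacle, I expect, is precisely this possible cancellation: on the set where $(x+y)+e(x-y)(3-2(x-y)^2)$ happens to be comparable to $2\bigl(b\cdot 2^{k_1-k_2}-d\cdot 2^{k_2-k_1}\bigr)(1-(x-y)^2)^{3/2}$, none of the individual summands of $\Phi^A_{xy}$ provides a usable pointwise lower bound. On this exceptional locus I would pass to the second derivative $\partial_y^2[\Phi^A(x,y)-\Phi^A(z,y)]$ and invoke Lemma \ref{lpt} with $k=2$; the leading part of $\Phi^A_{yy}$ is still the non-vanishing rational function inherited from the skew-symmetric case, so this substitution is available and yields a (possibly smaller exponent $c$) bound that is still dominated by each of the four factors above. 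The $2^{-c_1 s}$ factor is inherited unchanged from the support condition $|x-y|\approx 2^{-s}$ and is insensitive to the entries of $A$, so no new argument is needed there.
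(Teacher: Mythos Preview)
Your $TT^*$ setup and the computation of $\Phi^A_{xy}$ match the paper's, and routing the bounds $|\lambda b\,2^{2k_1}|^{-c}$, $|\lambda d\,2^{2k_2}|^{-c}$ through the constant part of the Hessian is also what the paper does. Note, however, that these two bounds come from the \emph{single} number $b\,2^{k_1-k_2}-d\,2^{k_2-k_1}$, not from two separate summands; the paper extracts them by a short trichotomy (if one of $|b\,2^{2k_1}|$, $|d\,2^{2k_2}|$ dominates the other by a factor $8$ the difference is comparable to the larger and $k=1$ applies, while if they are comparable then $|\lambda 2^{k_1+k_2}|\approx|\lambda b\,2^{2k_1}|\approx|\lambda d\,2^{2k_2}|$ and the already-established bounds suffice). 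Your sketch does not supply this case split.

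The genuine gap is your fallback on the cancellation locus. You propose Lemma~\ref{lpt} with $k=2$, citing ``the leading part of $\Phi^A_{yy}$''. But after differencing and the mean value theorem the quantity controlling $k=2$ is $\partial_y\Phi^A_{xy}$, not $\Phi^A_{yy}$; a direct computation gives
\[
\partial_y\Phi^A_{xy}=\frac{1-e(3-6(x-y)^2)}{(1-(x-y)^2)^{3/2}}-\frac{3(x-y)\bigl((x+y)+e[3(x-y)-2(x-y)^3]\bigr)}{(1-(x-y)^2)^{5/2}},
\]
whose leading behaviour for $|x-y|\approx 2^{-s}$ is $(1-3e)-3(x-y)(x+y)$. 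This expression can itself vanish, and even where it does not it is at best of size $\max\{|1-3e|,2^{m-s}\}$ rather than $2^m$; hence $k=2$ produces only a bound $\lesssim|\lambda 2^{k_1+k_2}|^{-c}$ and loses the factor $2^m$ required for $|\lambda 2^{k_1+k_2}2^m|^{-c}$ when $2^m$ is large. The paper avoids this by going one derivative further: it checks that $\partial_y^{2}\Phi^A_{xy}\approx 2^m$ once $2^m\geq 2^{20}(|3e|+1)$ (the $b,d$ constant is annihilated by a single $\partial_y$, and the $e$-contributions are $O(1)$), and then applies Lemma~\ref{lpt} with $k=3$ to obtain $(\lambda 2^{k_1+k_2}2^m|x-z|)^{-1/3}$. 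For the companion bound $|\lambda 2^{k_1+k_2}|^{-c}$ the paper does not attempt any pointwise lower bound on a single derivative at all; it instead invokes the finite-type condition $\sum_{\alpha=1}^{d}|\partial_y^{\alpha}\Phi^A_{xy}|\geq c>0$ (automatic since $\Phi^A_{xy}$ is a rational function of $x-y$ plus a constant), which absorbs the $e$-term uniformly. Replacing your $k=2$ step by these two observations --- finite type for the $|\lambda 2^{k_1+k_2}|^{-c}$ bound and $k=3$ for the $2^m$ gain --- makes the argument go through.
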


\begin{proof}[Proof of \eqref{kk}]
We consider the integral kernel of $ \mathcal{C}^{s,m}_{A,K} [  \mathcal{C}^{s,m}_{A,K} ]^*$, following a similar approach as in \eqref{nee}. Ignoring oscillatory effects for the moment, we deduce from the support condition of $\beta$ that $\|C_{A,K}^{s,m}\|_{L^2(\mathbb{R}) \to L^2(\mathbb{R})} \leq 2^{-s}.
$ To utilize the oscillatory integral, we compute the Hessian of the phase $\Phi_A(x, y)$:
\begin{align*}
[\Phi^A]_{xy}(x, y) &= \frac{(x + y) + e\left[ 3(x - y) - 2(x - y)^3 \right]}{\left( 1 - (x - y)^2 \right)^{3/2}} - b \frac{2^{2k_1 + 1}}{2^{k_1+k_2}} + d \frac{2^{2k_2 + 1}}{2^{k_1+k_2}}.
\end{align*}
We observe the following:
\begin{enumerate}
    \item There exists $d \in \mathbb{N}$ such that $
    \sum_{\alpha = 1}^d \left| \partial_y^\alpha \left[\Phi^A\right]_{xy}(x, y) \right| \geq c > 0,$    
    \item If $2^{m} \geq 2^{20} (|3e| + 1)$, then $ \left| \partial_y^{2} \left[\Phi^A\right]_{xy}(x, y) \right| \geq c \, 2^{m},$
     \end{enumerate}
where $c$ is independent of $k_1$ and $k_2$. 
The finite type condition of \([\Phi^A]_{xy}(x, y)\) is ensured because both the numerator and denominator are polynomials. To verify (2), we compute
\begin{align*}
\partial_y^{2} \left[\Phi^A\right]_{xy}(x, y) &= \frac{6(y - x)}{\left( 1 - (y - x)^2 \right)^{5/2}} + \frac{15 (y - x)^2 \left( -2e(x - y)^3 + 3e(x - y) + x + y \right)}{\left( 1 - (y - x)^2 \right)^{7/2}} \\
&\quad + \frac{3 \left( -2e(x - y)^3 + 3e(x - y) + x + y \right)}{\left( 1 - (y - x)^2 \right)^{5/2}}.
\end{align*}
Under the assumptions $2^m \geq 2^{20} (|3e| + 1)$ and $|x - y| \approx 2^{-s} \ll 1$, we find that
\[
\left| \partial_y^2 [\Phi^A]_{xy}(x, y) \right| \approx 2^m.
\] 
Returning to the proof, we now apply Lemma \ref{lpt} combined with observations $(1)$ and $(2)$  to obtain the bounds $
(\lambda 2^{k_1 + k_2} |x - z|)^{-1/d} \quad \text{and} \quad (\lambda 2^{k_1 + k_2} 2^m |x - z|)^{-1/3}$
on the right-hand side of \eqref{nee}. Consequently, we have 
$$\| \mathcal{C}^{s,m}_{A,K} \|_{L^2(\mathbb{R}) \mapsto L^2(\mathbb{R})} \le C_A \cdot 2^{-c_1s} \min\{ |\lambda 2^{k_1+k_2} 2^m|^{-c}, |\lambda 2^{k_1+k_2}|^{-c}\}.$$
Thee estimate directly gives \eqref{kk} under the assumption that  
\begin{align}\label{ass1}
	2^{k_1 + k_2} 2^m + (|3a| + 1) 2^{k_1 + k_2-s} \geq 2^{-5} \left(|b 2^{2k_1}| + |d 2^{2k_2}|\right),
\end{align}  
We now analyze the remaining situation.
In this remaining case, the contributions from $- b \frac{2^{2k_1 + 1}}{2^{k_1+k_2}} + d \frac{2^{2k_2 + 1}}{2^{k_1+k_2}}$ in the Hessian become dominant. Specifically, consider the case where \( |\lambda b 2^{2k_1}| \geq |\lambda d 2^{2k_2 + 3}| \) or \( |\lambda d 2^{2k_2}| \geq |\lambda b 2^{2k_1 + 3}| \). It follows that \( \left| [\Phi^A]_{xy}(x, y) \right| \approx \left| b \frac{2^{2k_1 + 1}}{2^{k_1+k_2}} \right| \) or \( \left| d \frac{2^{2k_2 + 1}}{2^{k_1+k_2}} \right| \). Applying Lemma \ref{lpt} with \( k = 1 \), this yields the bound \( (\lambda b 2^{2k_1} |x - z|)^{-1/2} \), which confirms \eqref{kk} for this case. Next, consider the situation where \( |\lambda d 2^{2k_2}| \approx |\lambda b 2^{2k_1}| \). This implies \( |\lambda 2^{k_1 + k_2}| \approx |\lambda b 2^{2k_1}| \approx |\lambda d 2^{2k_2}| \), which directly leads to \eqref{kk}.
\end{proof}
\subsection*{Proof of \eqref{dcy22}}By applying Proposition \ref{prop45}, we obtained the decay $2^{-c(s+\ell_2)}$ in (\ref{dcy22}). So, in order to obtain $2^{-\ell_1}$, it suffices to show
\begin{align}\label{dcc2}
\|\mathcal{C}^{s}_{A,K} \mathcal{P}_{k_1+k_2-\ell_2}^{2} \mathcal{P}^1_{\ell_1}\|_{L^2(\mathbb{R}) \mapsto L^2(\mathbb{R})} \le C_A2^{-\ell_1/2},\ 
\end{align}
under the condition that
\begin{align}\label{k3}
 2^{\ell_1}\ge 2^{100}\left( 2^{\ell_2}+(1+|e|)|\lambda 2^{k_1+k_2}|+|\lambda b 2^{2k_1}|+|\lambda  d2^{2k_2}|\right).
 \end{align}
 This yields $2^{-c\ell_2}$ in (\ref{dcy22}).

\begin{proof}[Proof of (\ref{dcc2})]

It suffices to prove (\ref{ogp}) and (\ref{obp}) for the phase function $\Phi^A(x,y)$ in (\ref{k4}).  The inequality (\ref{ogp}) follows in the similar manner. For the phase  $\Phi^A(x,y)$ defined in  (\ref{k4}), by using  $  |\xi|$ dominating condition of (\ref{k3}) with   $|x-y|\approx 2^{-s}\le 1/2$, one can obatin (\ref{k42}) as
\begin{align*} 
	&|\partial_y(\lambda 2^{k_1+k_2}\Phi^A(x,y)-\xi y)|\\
	&\ge |\xi|-   \left(|\lambda2^{k_1+k_2} 2^m|+ (1+|e|)|\lambda 2^{k_1+k_2}|+   |\lambda b 2^{2k_1}2^{-s}|+|\lambda  d2^{2k_2}2^{-s}| \right)\\
& \approx |\xi| \approx 2^{\ell_1}.
\end{align*}
which leads (\ref{obp}). Therefore, we obtain \eqref{dcc2}.
\end{proof}
 \subsection{ Symmetric Case $A_w=0$}
In this section, 
we consider
 \begin{align}\label{44m}
 \text{ if $A\neq \begin{pmatrix} c & 0 \\ 0 & c\cdot 2^{2a} \end{pmatrix}$  for $c\in \mathbb{R}\setminus \{0\}$, then }\|\mathcal{E}^2_Af\|_{L^p(\mathbb{R}^3)}\le C_{p, A}\|f\|_{L^p(\mathbb{R}^3)}. 
 \end{align}
 and
 \begin{align}\label{44m2}
\text{ if $A\neq \begin{pmatrix} c & 0 \\ 0 & c \end{pmatrix}$  for $c\in \mathbb{R}\setminus \{0\}$, then } \|\mathcal{E}^1_Af\|_{L^p(\mathbb{R}^3)}&\le C_{p, A}\|f\|_{L^p(\mathbb{R}^3)}.
\end{align}
In view of \eqref{pp2} and \eqref{redm}, it is enough to consider $ \mu_{k_1, k_2}^{b, d, e}\ast f$ 
where \(\mu_{k_1, k_2}^{b, d, e}\) is the measure defined by
\begin{align*}
 \int_{\frac{\pi}{4}}^{\frac{3\pi}{4}} f\left(2^{k_1} \cos \theta, \, 2^{k_2} \sin \theta, \, (b \cdot 2^{2k_1} - d \cdot 2^{2k_2}) \cos 2\theta + e \cdot 2^{k_1 + k_2} \sin 2\theta + (b \cdot 2^{2k_1} + d \cdot 2^{2k_2}) \right) d\theta.
\end{align*}
The Euclidean Fourier transform of the measure $\mathcal{F}^{1,2,3}\mu_{k_1,k_2}^{b,d,e}$ is 
\begin{align}\label{042d}
e^{2\pi i(b 2^{2k_1}+ d2^{2k_2})\xi_3}\int_{[\frac{\pi}{4}, \frac{3\pi}{4}]}e^{-2\pi i \phi(\theta,(2^{k_1}\xi_1,2^{k_2}\xi_2, e2^{k_1+k_2}\xi_3, (b 2^{2k_1}- d2^{2k_2})\xi_3)}d\theta, 
\end{align} 
for $\phi(\theta,\eta_1,\eta_2,\eta_3,\eta_4)= (\eta_1,\eta_2,\eta_3,\eta_4)\cdot  (\cos\theta,\sin\theta,\cos 2\theta, \sin 2\theta)$. We shall prove
\begin{align}\label{12kk}
	\left|\int_{[\frac{\pi}{4}, \frac{3\pi}{4}]}e^{-2\pi i [ (\eta_1,\eta_2,\eta_3,\eta_4)\cdot (\cos\theta,\sin\theta,\cos 2\theta, \sin 2\theta) ]}d\theta  \right|\lesssim  (1+|\eta|)^{-1/4}.
\end{align} 
\begin{proof}[Proof of (\ref{12kk})]
Let $\phi(\theta,\eta)= (\eta_1,\eta_2,\eta_3,\eta_4)\cdot  (\cos\theta,\sin\theta,\cos 2\theta, \sin 2\theta)$. Let $e(\theta)= (\cos\theta,\sin\theta)$; then $e'(\theta)=(-\sin\theta,\cos\theta)$, which we denote by $e^{\perp}(\theta)$. Then from the pair of the first and third derivatives and the pair of the second and fourth derivatives below
\begin{align*}
\partial_{\theta} \phi(\theta,\eta)&= (\eta_1,\eta_2)\cdot e^{\perp}(\theta)+2(\eta_3,\eta_4) \cdot e^{\perp}(2\theta)\\
\partial^2_{\theta} \phi(\theta,\eta)&= - (\eta_1,\eta_2)\cdot e(\theta)-4(\eta_3,\eta_4)\cdot e(2\theta)\\
\partial^3_{\theta} \phi(\theta,\eta)&= -(\eta_1,\eta_2)\cdot e^{\perp}(\theta)-8(\eta_3,\eta_4) \cdot e^{\perp}(2\theta)\\
\partial^4_{\theta} \phi(\theta,\eta)&=  (\eta_1,\eta_2)\cdot e(\theta)+16(\eta_3,\eta_4)\cdot e(2\theta),
\end{align*}
one can observe that  $$ \sum_{k=1}^4|\partial^{k}_{\theta} \phi(\theta,\eta)|\ge 2^{-100}( |(\eta_1,\eta_2)|+|(\eta_3,\eta_4)|).$$
From this observation with van der Corput lemma in page 334 of \cite{MR1232192}, we can get (\ref{12kk}).
\end{proof}

In \cite{AIF_1992__42_3_637_0}, Theorem $3.2$ proved that lacunary maximal operators are \(L^p\)-bounded provided the Fourier transform of the associated measure satisfies a decay condition. But, the coefficient $(b 2^{2k_1-1}- d2^{2k_2-1})$ is needed to be handle carefully. To establish \eqref{44m}, we prove
\begin{align}\label{1para}
\left\|\sup_{k_1,k_2} \left| \mu_{k_1,k_2}^{b,d,e}\ast f \right|\right\|_{L^p(\mathbb{R})} \le C_{p, A} \|f\|_{L^p(\mathbb{R})},	
\end{align}
where the measure \(\mu_{k_1,k_2}^{b,d,e}\) is defined by
\[
 \int_{\frac{\pi}{4}}^{\frac{3\pi}{4}} f\left( 2^{k_1}\cos \theta, 2^{k_2}\sin \theta ,(b \cdot 2^{2k_1} - d \cdot 2^{2k_2})\cos 2\theta + e \cdot 2^{k_1 + k_2} \sin 2\theta + (b \cdot 2^{2k_1} + d \cdot 2^{2k_2}) \right) \, d\theta.
\]
Here, we outline the proof for \eqref{1para}, a different part not covered in \cite{AIF_1992__42_3_637_0}.

\begin{proof}[Proof of \eqref{1para}]
Decompose
\begin{align*}
\mathcal{F}^{1,2,3}\mu_{k_1,k_2}^{b,d,e}&=\left\{\psi(b2^{2k_1}\xi)+\psi(d2^{2k_2}\xi)\psi^c(b2^{2k_1}\xi)+\sum_{\ell_1,\ell_2=0}^{\infty}\varphi\left(\frac{d2^{2k_2}\xi}{2^{\ell_2}}\right)\varphi\left(\frac{b2^{2k_1}\xi}{2^{\ell_1}}\right)\right\}\mathcal{F}^{1,2,3}\mu_{k_1,k_2}^{b,d,e}.
\end{align*}
As we did in \eqref{lmdo}, the first two parts can be controlled by the composition of Hardy--Littlewood maximal operator and the maximal operator $\sup_{k_1} | \mu_{k_1,k_2}^{b,0,e}\ast f(x)|$ or $\sup_{k_2} | \mu_{k_1,k_2}^{0,d,e}\ast f(x)|$. To treat the two maximal operators, according to the Lifting Lemma (see page $484$ of \cite{MR1232192}), it suffices to show the $L^p$ boundedness of  $\sup_{k_1,k_2} | \tilde{\mu}_{k_1,k_2}^{0,d,e}\ast f |$ and $\sup_{k_1,k_2} | \tilde{\mu}_{k_1,k_2}^{b,0,e}\ast f |$. Here, the measure $\tilde{\mu}_{k_1,k_2}^{b,d,e}$ is defined by
\begin{align*}
	 \int_{\frac{\pi}{4}}^{\frac{3\pi}{4}} f\left( 2^{k_1}\cos \theta, 2^{k_2}\sin \theta ,(b \cdot 2^{2k_1} - d \cdot 2^{2k_2})\cos 2\theta+ (b \cdot 2^{2k_1} + d \cdot 2^{2k_2}), e \cdot 2^{k_1 + k_2} \sin 2\theta \right) \, d\theta.
\end{align*}
Let us say $b=0$. By Theorem $3.2$ in \cite{AIF_1992__42_3_637_0}, the Fourier decay condition \eqref{12kk} ensures the $L^p$ boundedness of these maximal operators.\\
To handle the last part $\varphi\left(\frac{d2^{2k_2}\xi_3}{2^{\ell_2}}\right)\varphi\left(\frac{b2^{2k_1}\xi_3}{2^{\ell_1}}\right)\mathcal{F}^{1,2,3}\mu_{k_1,k_2}^{b,d,e}$, we denote that 
\begin{align*}
	\mathcal{F}(\mathcal{P}^1_{\ell_1-2k_1}f)(\cdot, \xi_3):=\mathcal{F}f(\cdot, \xi_3)\varphi\left(\frac{b2^{2k_1}\xi_3}{2^{\ell_1}}\right),\\
\mathcal{F}(\mathcal{P}^2_{\ell_2-2k_2}f)(\cdot, \xi_3):=\mathcal{F}f(\cdot, \xi_3)\varphi\left(\frac{d2^{2k_2}\xi_3}{2^{\ell_2}}\right).
\end{align*}
Under the assumption in \eqref{44m}, we analyze two distinct cases based on the \( b \), \( d \), and \( e \).\\
When \( b/d \notin 2^{2\mathbb{Z}} \) and \( e \neq 0 \), we apply the Lifting Lemma. Then the decay estimate  \eqref{12kk} gives the following.
\begin{align*}
	|\mathcal{F}\tilde{\mu}_{k_1,k_2}^{b,d,e}(\xi_1,\xi_2,\xi_3,\xi_4)| &\lesssim |(b \cdot 2^{2k_1} - d \cdot 2^{2k_2})\xi_3|^{-1/4} + |e \cdot 2^{k_1 + k_2} \xi_4|^{-1/4}
\end{align*}
In the case where either \( b/d \notin 2^{2\mathbb{Z}} \) or \( e \neq 0 \) not both, we utilize the following estimate:
\begin{align}\label{dcyest}
	|\mathcal{F}\mu_{k_1,k_2}^{b,d,e}(\xi_1,\xi_2,\xi_3)| &\lesssim \min\left\{ |(b \cdot 2^{2k_1} - d \cdot 2^{2k_2})\xi_3|^{-1/4}, \, |e \cdot 2^{k_1 + k_2} \xi_3|^{-1/4} \right\}
\end{align}
Under the support $\varphi\left(\frac{d2^{2k_2}\xi_3}{2^{\ell_2}}\right)\varphi\left(\frac{b2^{2k_1}\xi_3}{2^{\ell_1}}\right)$, one can see that 
$|(b \cdot 2^{2k_1} - d \cdot 2^{2k_2})\xi|+|e2^{k_1+k_2}\xi|\gtrsim_{b,d}\max\{2^{\ell_1},2^{\ell_2}\} $. Combining these estimates and applying the square function method, it is easy to verify that
\begin{align}\label{l22}
	\|\sup_{k_1,k_2} | \mu_{k_1,k_2}^{b,d,e}\ast \mathcal{P}^1_{\ell_1-k_1}\mathcal{P}^2_{\ell_2-k_2}f|\|_{L^2}\lesssim 2^{-\varepsilon(\ell_1+\ell_2)} \|f\|_{L^2}.
\end{align}
Moreover, by utilizing the shifted maximal operators defined on page 741 of \cite{MR3231215} or page 18 of \cite{MR1232192}, we obtain
\begin{align}\label{lpp}
	\|\sup_{k_1,k_2} | \mu_{k_1,k_2}^{b,d,e}\ast \mathcal{P}^1_{\ell_1-k_1}\mathcal{P}^2_{\ell_2-k_2}f|\|_{L^p}\lesssim |\ell_1\ell_2|^{1/p} \|f\|_{L^p}.
\end{align}
For more details on the arguments involving shifted maximal operators, refer to the Section $4.2$ in \cite{MR4701897}. By the usual interpolation argument using \eqref{l22} and \eqref{lpp}, we can handle the summation over \(\ell_1\) and \(\ell_2\). Consequently, we obtain the \(L^p\)-boundedness of \(\sup_{k_1,k_2} | \mu_{k_1,k_2}^{b,d,e} \ast f(x) |\).
\end{proof}
Finally, we consider the one parameter case. Following a similar approach as above and in view of \eqref{dcyest}, we obtain
\[
|\mathcal{F}\gamma^{b,d}_{k,k}(\xi)| \lesssim \left( |(b - d) \cdot 2^{2k} \xi| + |e \cdot 2^{2k} \xi| \right)^{-1/4}
\]
by \eqref{12kk}. Under the assumption that \(b \neq d\) or \(e \neq 0\), this yields \eqref{1para} for \(k_1 = k_2\), which in turn implies \eqref{44m2}.

\end{document}